\documentclass[10pt,leqno]{amsart}
\usepackage{graphicx}
\baselineskip=16pt

\usepackage{indentfirst,csquotes}

\topmargin= .5cm
\textheight= 20cm
\textwidth= 32cc
\baselineskip=16pt

\evensidemargin= .9cm
\oddsidemargin= .9cm

\usepackage{amssymb,amsthm,amsmath,mathtools}
\usepackage{xcolor,paralist,hyperref,fancyhdr,etoolbox}
\usepackage[english]{babel}
\newtheorem{theorem}{Theorem}[]

\newtheorem{lemma}[theorem]{Lemma}

\theoremstyle{remark}
\newtheorem*{remark}{Remark}

\hypersetup{ colorlinks=true, linkcolor=black, filecolor=black, urlcolor=black }


\usepackage[hyperref,doi,citestyle=ieee,backend=bibtex,sorting=nyt]{biblatex}
\bibliography{bibliography}
\AtEveryBibitem{
 \clearfield{date}
 \clearfield{eprint}
 \clearfield{isbn}
 \clearfield{issn}
 \clearlist{location}
 \clearfield{month}
 \clearfield{doi}

 \ifentrytype{book}{}{
  \clearlist{publisher}
  \clearname{editor}
  \clearfield{booktitle}
 }
}
\usepackage{comment}
\begin{document}
\title{On Hollenbeck-Verbitsky conjecture for  \(4/3 <  p <  2\) and reverse Riesz-type inequalities for \(2<p<4\)} 
\author{Vladan Jaguzović}
\address{ Faculty of Natural Sciences and Mathematics \newline\indent
University of Banja Luka\newline\indent
Mladena Stojanovića 2\newline\indent
78000 Banja Luka \newline\indent
Republic of Srpska\newline\indent
Bosnia and Herzegovina
}
\email{vladan.jaguzovic@pmf.unibl.org}

\begin{NoHyper} 
  \let\thefootnote\relax
\footnotetext{MSC2020: Primary: 30H10, 30H05; Secondary: 31A05, 31B05.} 
\footnotetext{Keywords and phrases:  Sharp inequalities, Riesz projection, Best constants, Harmonic functions, subharmonic functions }
\end{NoHyper}
\begin{abstract}
Let \(P_+\) be the Riesz's projection operator and let \(P_-=I-P_+.\) We find the best upper estimates of the  expression \(\left\lVert \left( \left\lvert P_+f \right\rvert ^s +  \left\lvert P_-f \right\rvert ^s \right) ^{1/s}    \right\rVert _p  \) in terms of Lebesgue p-norm of the function \(f \in L^p(\mathbf{T})\) for \(p \in (4/3,2)\) and \(0 < s \leq  \frac{p}{p-1},\) thus extending results from  \cite{Melentijevic_2022} and \cite{Melentijevic_2023}, where the mentioned range is not considered. Also, we find the best lower estimates of the same quantities for \(p \in (2,4)\) and \(s \geq \frac{p}{p-1},\) thus extending results from \cite{melentijevic-reverse-2025}.
\end{abstract} 
\maketitle \pagestyle{myheadings} \markboth{VLADAN JAGUZOVIĆ}{RIESZ-TYPE INEQUALITIES}
\section{Introduction and main result}
For \(1 \leq  p \leq  \infty\) we denote by \(L^p(\mathbb{T})\) the Lebesgue space on the unit circle \(\mathbb{T}\) in the complex plane \(\mathbb{C}.\)  By \(H^p(\mathbb{T})\) we denote a subspace of all functions for which negative Fourier coefficients are equal zero, i.e., 
 \[
   f \in H^p(\mathbb{T}) \Leftrightarrow f \in L^p(\mathbb{T}) \quad \text{and} \quad \hat{f}(n)=\int_{-\pi}^{\pi}f(e^{it}) e^{-itn}\: \frac{dt}{2\pi}=0,\quad  \forall n <0. 
 \]
The analytic and the co-analytic projection operators \(P_+,P_- : L^p(\mathbb{T}) \to H^p(\mathbb{T}),\) are defined with
\[
  P_+f(e^{it}) = \sum_{n=0}^{\infty} \hat{f}(n) e^{int}, \quad P_-f(e^{it})=\sum_{n=-\infty}^{-1} \hat{f}(n) e^{int},
\]
where \(f(e^{it})=\sum_{n=-\infty}^{\infty} \hat{f}(n)e^{int}.\) Note that \(P_+ + P_- = I,\) where \(I\) is identity operator. For a function \(f\) defined on the unit disk \(\mathbb{D} = \{ z \in \mathbb{C} \mid \left\lvert z \right\rvert <1 \}\) we denote 
\[
  M_p(f,r) = \left( \int_{T}^{}\left\lvert f(r e^{it}) \right\rvert ^{p}\: \frac{dt}{2\pi}  \right) ^{1/p}.
\]
The harmonic Hardy space \(h^p, 1 \leq  p \leq \infty,\) consists of all harmonic function for which 
\(
  \sup_{0<r<1} M_p(f,r) < \infty.
\)
It is well known that for a harmonic function \(f\) the value of \(M_p(f,r)\) is  increasing with respect to \(r.\) The norm on the harmonic Hardy space \(h^p\) is given by
\[
  \left\lVert f \right\rVert _{p} = \sup_{0<1<r} M_p(f,r) = \lim_{r \to 1-} M_p (f,r).
\]
The analytic Hardy space \(H^p\) is the subspace of \(h^p\) that contains only analytic functions. For all \(f \in H^p\) there exists radial boundary value almost everywhere, i.e. exists \(f^*(e^{it}) = \lim_{r \to 1-} f(re^{it})\) for almost every \(e^{it} \in \mathbb   {T}.\) Also, it is well known that \(f^*\) belongs to the space \(H^p(\mathbb{T}),\) and that \(\left\lVert f^* \right\rVert _{L^p(\mathbb{T})}= \left\lVert f \right\rVert _{H^p}.\) For a  given function \(\varphi \in H^p(\mathbb{T}),\) the Poisson extension of \(\varphi\) gives us a function \(f \in H^p,\) such that \(f^* = \varphi\) almost everywhere on \(\mathbb{T}\). After identifying these spaces, the  analytic projection operator (Riesz projection operator) \(P_+\) can be represented as the Cauchy integral
\[
  P_+f(z)=\frac{1}{2\pi i} \int_{\mathbb{T}}^{}\frac{f(\tau)}{\tau-z}\: d \tau, \quad z \in \mathbb{D}. 
\]

More about the Riesz projection operator and Hardy spaces can be found in \cite{Pavlovic2004,Pavlovic2019,Rudin1987,DurenP1970,Garnett2006}.

 The \(L^p\) boundedness of the Riesz projection operator for \(1<p<\infty\) was first proved by Marcel Riesz in \cite{Riesz1928}. Nowadays, there are many easier proofs of this fact. An interested reader can find  a proof based on Hardy-Stein equality in \cite{SteinP1933}. For \(p=1\) or \(p=\infty\) there is no bounded projection from \(L^p(\mathbb{T})\) to \(H^p(\mathbb{T}),\) \cite{newman1961,rudin1962}.  
 
 Gohberg and Krupnik in \cite{GokhbergKrupnik1968} established the lower estimate of the Riesz projection operator norm by showing that \(\left\lVert P_+ \right\rVert  \geq \frac{1}{\sin (\pi/p)},\) where they also proved that \(\left\lVert P_+ \right\rVert = \frac{1}{\sin (\pi/p)}\), for \(p=2^k,k \in \mathbb{N}.\) For a real-valued function \(f\) such that \(\hat{f}(0)=0\), Verbitsky proved in \cite{verbitsky_1984} that \(\left\lVert P_+ \right\rVert = \max \{ \frac{1}{2\cos \frac{\pi}{2p}}, \frac{1}{2\sin \frac{\pi}{2p}}\}.\) Solving the general problem turned out to be more difficult, and it was finally solved by Hollenbeck and Verbitsky in their seminal paper \cite{Hollenbeck_Verbitsky_2000} from 2000. For complex-valued functions, they showed that \(\left\lVert P_+ \right\rVert = \frac{1}{\sin(\pi/p)}.\) More precisely, the authors proved slightly stronger inequality
 \[
   \left\lVert \max  \left\{ \left\lvert P_+f \right\rvert ,\left\lvert P_- f \right\rvert  \right\}\right\rVert _{L^p(\mathbb{T})} \leq \frac{1}{\sin \frac{\pi}{p}} \left\lVert f \right\rVert _{L^p(\mathbb{T})} 
 \]
 for \(1 <p \leq  2,\)  while the analogous result for \(p \geq 2\) was proven in \cite{Hollenbeck_Verbitsky_2010}. In both papers, the main difficulty was proving some "elementary" inequalities using plurisubharmonic minorants. An interesting general approach to this kind of problems was given by Vasyunin and Volberg \cite{VasyuinVolberg2020}.

 Hollenbeck and Verbitsky in  \cite{Hollenbeck_Verbitsky_2010} posed a problem of finding the optimal constants \(C_{s,p}\) in inequalities 
 \[
   \left\lVert \left( \left\lvert P_+f \right\rvert ^s+ \left\lvert P_-f \right\rvert ^{s} \right) ^{\frac{1}{s}}  \right\rVert _p \leq  C_{s,p} \left\lVert f \right\rVert _p,
 \]
 for \( f \in L^p(\mathbb{T}),1<p<\infty,0<s<\infty.\) The most interesting part of this conjecture is the case where \(0 <s < \max \{ \sec^2 \frac{\pi}{2p},\csc ^2 \frac{\pi}{2p}\},\) where the conjectured values of \(C\) are given by
 \begin{align*}
  C_{s,p} = \frac{2^{1/s}}{2\cos \frac{\pi}{2p}}, & \quad 1<p<2,\; 0<s<\sec^2 \frac{\pi}{2p}\\
  C_{s,p} = \frac{2^{1/s}}{2\sin \frac{\pi}{2p}}, & \quad 2<p<\infty,\; 0 <s<\csc^2 \frac{\pi}{2p}.
 \end{align*}
 Kalaj \cite{Kalaj_2019} proved the conjecture for \(s = 2, 1<p<\infty.\)  An extension of these results was proven by Melentijević and Marković \cite{Melentijevic_2022} where conjecture was shown for \(p \geq 2,0\leq s\leq p;\;1 <p \leq 5/4,0<s\leq 4,\) and \(\frac{5}{4}<p<2,0<s\leq 2.\) In the recent paper \cite{Melentijevic_2023} Melentijević settled down the hypothesis in full generality for \(p \geq 2\;(s>0),\) while for \(1 <p \leq  \frac{4}{3}\) he proved  it for \( s \leq  \sec ^2 \frac{\pi}{2p}.\) Here we will finally address the case where \(4/3 <p<2,\) for \(s \leq  \frac{p}{p-1}.\) The range where \(0 < s \leq \frac{p}{p-1}\) is the best possible where this method works. Even though we believe that the conjecture is true, it cannot be done by this method. It can be seen that the main inequality (\ref{simplified-main-inequality})  does not hold for \(r = 1/2,t=\pi/2,p=3/2, s = 4 = \sec^2 \frac{\pi}{2p}.\) Let us now state our main results.
 \begin{theorem}\label{tm:Ryesz-type-inequalities}
      For \(\frac{4}{3}<p<2\) and \(0 < s \leq \frac{p}{p-1}\) we have that the inequality
    \[
      \left\lVert \left( \left\lvert P_+f \right\rvert ^{s} + \left\lvert P_-f \right\rvert ^{s} \right) ^{1/s}  \right\rVert _p \leq  C_{s,p} \left\lVert f \right\rVert _p,
    \]
    holds for  \(f \in L^p(\mathbb{T})\), where \(C_{s,p}= \frac{2^{\frac{1}{s}}}{2 \cos \frac{\pi}{2p}}.\)
\end{theorem}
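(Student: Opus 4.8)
The plan is to use the subharmonic--minorant method of Hollenbeck and Verbitsky, in the sharpened form exploited by Melentijević. First I would pass from the circle to the disc: set $F=P_+f$ and let $G$ be the analytic function on $\mathbb{D}$ whose boundary values coincide with $\overline{P_-f}$, so that $f=F+\overline G$ a.e.\ on $\mathbb{T}$, $F,G\in H^p$, and $G(0)=0$; by density it suffices to consider analytic polynomials $F,G$. In this notation the theorem reads
\[
  \int_{\mathbb{T}} \bigl(|F|^{s}+|G|^{s}\bigr)^{p/s}\,\frac{dt}{2\pi} \;\le\; C_{s,p}^{\,p}\int_{\mathbb{T}} |F+\overline G|^{p}\,\frac{dt}{2\pi}.
\]
The key step will be to construct a function $\Phi\colon\mathbb{C}^2\to\mathbb{R}$, homogeneous of degree $p$ and invariant under $(w,z)\mapsto(e^{i\gamma}w,e^{-i\gamma}z)$, under $w\leftrightarrow z$, and under $(w,z)\mapsto(\overline w,\overline z)$, with the properties: (i) $\Phi(w,z)\le C_{s,p}^{\,p}\,|w+\overline z|^{p}-(|w|^{s}+|z|^{s})^{p/s}$ for all $w,z\in\mathbb{C}$; (ii) $\zeta\mapsto\Phi(F(\zeta),G(\zeta))$ is subharmonic on $\mathbb{D}$ (for which it is enough that $\Phi$ be plurisubharmonic); and (iii) $\Phi(w,0)\ge0$ for all $w$. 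Once such a $\Phi$ is available, integrating (i) over $\mathbb{T}$ and applying (ii) together with $G(0)=0$ and the sub--mean value inequality yields
\[
  0 \le \int_{\mathbb{T}}\Phi(F,G)\,\frac{dt}{2\pi} \le C_{s,p}^{\,p}\int_{\mathbb{T}}|F+\overline G|^{p}\,\frac{dt}{2\pi}-\int_{\mathbb{T}}\bigl(|F|^{s}+|G|^{s}\bigr)^{p/s}\,\frac{dt}{2\pi},
\]
which is the asserted inequality.

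Next I would reduce the pointwise inequality (i) to two real variables. Because $\Phi$ and both terms on its right-hand side are $p$-homogeneous and invariant under the torus action, the swap, and conjugation, every pair $(w,z)$ with $w\neq0$ can be normalised to $(1,re^{it})$ with $r\in[0,1]$ and $t\in[0,\pi]$ (the case $w=0$ being elementary and using only $C_{s,p}\ge1$). Hence (i) is equivalent to the scalar inequality
\[
  (1+r^{s})^{p/s}\;\le\;C_{s,p}^{\,p}\,(1+r^{2}+2r\cos t)^{p/2}-\Phi(1,re^{it}),\qquad r\in[0,1],\ t\in[0,\pi],
\]
which, once $\Phi$ has been chosen, is precisely the simplified main inequality (\ref{simplified-main-inequality}).

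For $\Phi$ I would take an explicit algebraic expression, homogeneous of degree $p$: a leading term proportional to $|w+\overline z|^{p}$, lower-order rotation-invariant corrections such as $|w|^{p}+|z|^{p}$, $(|w|^{2}+|z|^{2})^{p/2}$, and $\Re(wz)\,|w+\overline z|^{p-2}$, and a term built from $(|w|^{s}+|z|^{s})^{p/s}$. The remaining free constants would be pinned down by requiring equality in (i) along the locus $\{z=w\}$; this locus corresponds to the real, mean-zero functions $P_-f=\overline{P_+f}$, which are extremal for Verbitsky's inequality $\|f+iHf\|_p\le\sec\frac{\pi}{2p}\|f\|_p$ on $1<p<2$, and it is this normalisation that produces the value $C_{s,p}=2^{1/s}/(2\cos\frac{\pi}{2p})$.

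Two points remain to be checked, and I expect the second to be the main obstacle. First, the plurisubharmonicity (ii) of $\Phi$: following Hollenbeck and Verbitsky this can be reduced to a ``rank-one'' Hessian inequality evaluated along the admissible analytic discs, the verification using $1<p<2$. Second, and harder, the simplified main inequality (\ref{simplified-main-inequality}) itself. Here I would differentiate in $t$ and show that, for each fixed $r$, the extremal $t$ is an endpoint, $t\in\{0,\pi\}$, which cuts the problem down to a one-variable inequality in $r\in[0,1]$; that inequality should then follow from a careful analysis of the sign of an explicit function of $r$ (depending on $p$ and $s$). The hypothesis $s\le\frac{p}{p-1}$ enters decisively precisely at this last stage, as the exact threshold that makes the required convexity/monotonicity hold, and it cannot be relaxed within this method: the inequality (\ref{simplified-main-inequality}) already fails at $r=\tfrac12$, $t=\tfrac\pi2$, $p=\tfrac32$, $s=4=\sec^{2}\tfrac{\pi}{2p}$.
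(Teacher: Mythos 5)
Your reduction framework is correct and matches the paper's: use the plurisubharmonic minorant $\mathrm{Re}\,(wz)^{p/2}$ (this is not a free object to be designed — the paper takes the explicit Hollenbeck--Verbitsky minorant with coefficient $\tan\frac{\pi}{2p}$, fixed so that (\ref{main_inequality1}) becomes the exact pointwise statement), then homogenise and normalise to arrive at (\ref{simplified-main-inequality}) in $(r,t)\in[0,1]\times[0,\pi]$. So far this is standard and correctly outlined, if a little overdressed: there is no family of candidate $\Phi$'s with constants to tune, the minorant is already known.

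Where the proposal genuinely comes up short is in the two places where the actual work of this paper lives, and your sketch papers over both. First, you propose to show that for fixed $r$ the minimum over $t$ is attained at an endpoint $t\in\{0,\pi\}$. That is neither what the paper does nor something one can expect to prove: $\Phi(r,\cdot)$ does have interior stationary points, and the paper deals with them by citing \cite[Subsection 4.1]{Melentijevic_2022}, which already handles $t\ge\pi/p$, all interior stationary points, and the entire boundary of $[0,1]\times[0,\pi/p]$ \emph{except} $t=0$. The only case genuinely left open is $t=0$, i.e.\ the one-variable inequality (\ref{main_inequality2}). You should invoke that result rather than claim an endpoint property that would need its own (likely false, and certainly nontrivial) proof.

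Second, for the remaining one-variable inequality $F(r)\ge0$ you write only that it ``should follow from a careful analysis of the sign of an explicit function,'' but this is precisely the novelty of the paper and it does not come for free: $F$ is not monotone in $r$. The paper's key idea is to observe that it suffices to check $F\ge0$ at $r=0$, $r=1$, and at critical points, and then to \emph{use the critical-point equation} (\ref{equation_which_we_got_from_derivative-of_f}) to eliminate $\tan\frac{\pi}{2p}$ from $F$ in two different ways depending on whether $r\le\frac12$ or $r\ge\frac12$. This produces two auxiliary functions $F_1$ (shown decreasing) and $F_2$ (shown increasing), both of which only need to be checked at $r=\frac12$, and those single-point inequalities are then proved by convexity/concavity arguments in $s$. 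Without this substitution trick you have no route past the non-monotonicity of $F$, so as written the proposal has a real gap at exactly the step it defers.
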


One can easily also state the corresponding results for half-line multipliers, half-space multipliers and analytic martingales, by consulting the known results in \cite{Hollenbeck_Verbitsky_2000} and \cite{Melentijevic_2023}.

In  \cite[Section 2, Remark 2.1]{Melentijevic_2022} it is proven that 
\[
  C_{s,p} \geq \frac{2^{1/s}}{2 \cos \frac{\pi}{2p}},
\]
so, our main concern will be to prove the other side of the inequality, which we prove in Section \ref{main-section}. The main method used in this paper is the method of plurisubharmonic minorants. In the paper \cite{Melentijevic_2022} it is proved that the result can be delivered from the elementary inequality (\ref{main_inequality1}). It is easy to transform this inequality into the inequality with two variables (\ref{simplified-main-inequality}), from which can be concluded that it is enough to prove (\ref{main_inequality2}). Given inequality is non-trivial, and we prove it in this paper. Since the expression in (\ref{main_inequality2}) neither increases nor decreases with respect to \(r,\) we replace it with two new expressions that have same values in stationary points and which are monotonical.
\begin{remark}
  Also, it is also worth mentioning that for a real-valued function and a complex valued function the inequality in the Hollenbeck-Verbitsky conjecture holds with same constants for all \(0<s \leq  \max \{ \sec^2\frac{\pi}{2p},\csc ^2 \frac{\omega}{2p}\}.\) However, in the complex case, the given inequality does not hold for \(s > \max \{ \sec ^2 \frac{\pi}{2p}, \csc^2\frac{\pi}{2p}\},\) which means that the constants depends more substantially on \(s.\)
\end{remark}

\begin{theorem}\label{tm:reverse-Riesz-type-inequalities}
  If \(p \in (2,4)\) and \(s \geq \frac{p}{p-1}\) then
  \[
    \left\lVert f \right\rVert _{L^p(\mathbb{T})}\leq 2^{1-\frac{1}{s}} \cos \frac{\pi}{2p}\left\lVert \left( \left| P_+f \right| ^{s}+\left| P_-f \right| ^{s} \right) ^{1/s}   \right\rVert _{L^p(\mathbb{T})}.
  \]
\end{theorem}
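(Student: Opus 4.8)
The plan is to reduce Theorem~\ref{tm:reverse-Riesz-type-inequalities} to an inequality about analytic functions via the standard decomposition and then run the method of plurisubharmonic minorants, in parallel with the proof of Theorem~\ref{tm:Ryesz-type-inequalities}. Concretely, set $F=P_+f$ and let $G$ be the analytic function on $\mathbb D$ whose boundary function is $\overline{P_-f}$; this is legitimate because $P_-f\in L^p(\mathbb T)$ by M.\ Riesz's theorem and $P_-f$ has only Fourier modes $\le-1$, so $\overline{P_-f}\in H^p(\mathbb T)$ and $G(0)=0$. On $\mathbb T$ we then have $f=F+\overline G$, $|P_+f|=|F|$ and $|P_-f|=|G|$, so the assertion is equivalent to
\[
  \int_{\mathbb T}\bigl|F+\overline G\bigr|^{p}\,\frac{dt}{2\pi}\ \le\ C^{p}\int_{\mathbb T}\bigl(|F|^{s}+|G|^{s}\bigr)^{p/s}\,\frac{dt}{2\pi},\qquad C:=2^{1-\frac1s}\cos\tfrac{\pi}{2p},
\]
for all $F,G\in H^p$ with $G(0)=0$. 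Replacing $F,G$ by the dilations $z\mapsto F(\rho z)$, $z\mapsto G(\rho z)$ and letting $\rho\to1^-$, one may assume $F$ and $G$ extend analytically past $\overline{\mathbb D}$.

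I would then look for a plurisubharmonic function $\Phi\colon\mathbb C^2\to[-\infty,\infty)$ satisfying
\[
  \Phi(w_1,w_2)\ \le\ \Psi(w_1,w_2):=C^{p}\bigl(|w_1|^{s}+|w_2|^{s}\bigr)^{p/s}-\bigl|w_1+\overline{w_2}\bigr|^{p}\quad\text{on }\mathbb C^2,\qquad \Phi(w,0)\ge0\ \ (w\in\mathbb C).
\]
Given such $\Phi$, the map $z\mapsto\Phi\bigl(F(z),G(z)\bigr)$ is subharmonic on $\mathbb D$, so its sub-mean-value inequality at the origin together with $G(0)=0$ gives $\int_{\mathbb T}\Phi(F,G)\,\frac{dt}{2\pi}\ge\Phi(F(0),0)\ge0$; since $\Phi\le\Psi$ pointwise this forces $\int_{\mathbb T}\Psi(F,G)\,\frac{dt}{2\pi}\ge0$, which is the inequality of the previous paragraph. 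The two requirements on $\Phi$ are consistent because $\Psi(w,0)=(C^{p}-1)|w|^{p}\ge0$: indeed $C=2^{1-\frac1s}\cos\frac{\pi}{2p}\ge1$ throughout the range $2<p<4$, $s\ge\frac{p}{p-1}$, with equality only in the limiting corner $(p,s)\to(2,2)$, where the inequality is just Parseval.

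The construction of $\Phi$ is the heart of the matter, and I would follow the scheme used for Theorem~\ref{tm:Ryesz-type-inequalities}. One first exploits the symmetries of $\Psi$: it is positively homogeneous of degree $p$, invariant under complex conjugation, invariant under $(w_1,w_2)\mapsto(\overline{w_2},\overline{w_1})$, and invariant under the circle action $(w_1,w_2)\mapsto(e^{i\alpha}w_1,e^{-i\alpha}w_2)$ (since $\overline{e^{-i\alpha}w_2}=e^{i\alpha}\overline{w_2}$, whence $w_1+\overline{w_2}\mapsto e^{i\alpha}(w_1+\overline{w_2})$). Looking for $\Phi$ with the same invariances, one uses the circle action to make $\arg w_1=\arg w_2=:\theta$ and homogeneity to normalise $|w_1|=1$, so it is enough to understand $\Phi$ on the slice $w_1=e^{i\theta}$, $w_2=re^{i\theta}$ with $r\ge0$, on which, writing $t:=2\theta$,
\[
  \Psi=C^{p}\bigl(1+r^{s}\bigr)^{p/s}-\bigl(1+r^{2}+2r\cos t\bigr)^{p/2}.
\]
The plurisubharmonicity of $\Phi$ becomes a differential inequality in $r$ and $t$, and --- exactly as for Theorem~\ref{tm:Ryesz-type-inequalities}, where the governing expression is neither increasing nor decreasing in the radial variable --- I would replace it by two auxiliary expressions that agree at the stationary points and are each monotone in $r$, and then conclude by one-variable calculus, using $2<p<4$ and $s\ge\frac{p}{p-1}$ to secure the requisite signs. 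Pinning down the correct $\Phi$ and carrying this elementary inequality through the whole parameter range is the step I expect to be the main obstacle; since the constant $2^{1-\frac1s}\cos\frac{\pi}{2p}$ is sharp, the construction must be tight, with no slack to exploit.
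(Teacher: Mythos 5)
Your overall framing is correct and matches the paper's method: write $f=P_+f+P_-f$ with $F=P_+f$, $G=\overline{P_-f}$ analytic and $G(0)=0$, and derive the theorem from a pointwise inequality on $\mathbb C^2$ via a plurisubharmonic minorant. Your consistency check $\Psi(w,0)=(C^p-1)|w|^p\ge 0$ and the identification of the relevant slice
\[
\Psi=C^p(1+r^s)^{p/s}-(1+r^2+2r\cos t)^{p/2}
\]
are also correct. But everything you have written is the known scaffolding (essentially the content of \cite{melentijevic-reverse-2025} and \cite{Melentijevic_2022}, which the paper simply cites), and the proof stops exactly where the paper's new work begins. You do not name the plurisubharmonic minorant --- the paper uses $\Phi=-\tan\frac{\pi}{2p}\,\mathrm{Re}\,(w_1w_2)^{p/2}$ (up to the normalising constant $2^p\cos^p\frac{\pi}{2p}$), whose plurisubharmonicity for $p\in[2,4]$ is a known lemma, not something one verifies on a slice --- and you do not reduce the two-variable problem to the one-variable inequality
\[
\frac{(1+r)^p}{2^p\cos^p\frac{\pi}{2p}}-\left(\frac{1+r^s}{2}\right)^{p/s}-r^{p/2}\tan\frac{\pi}{2p}\ \le\ 0,\qquad r\in[0,1],
\]
which is the $t=0$ boundary case; the reduction to this single line (handling all other $t$ and the interior stationary points) is carried out in \cite{melentijevic-reverse-2025}, and the paper takes it as a black box.

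The genuine content of the theorem is the proof of that one-variable inequality for $p\in(2,4)$, $s=\frac{p}{p-1}$ (one first reduces from $s\ge\frac{p}{p-1}$ to $s=\frac{p}{p-1}$ by noting that $\big\lVert(\tfrac{|P_+f|^s+|P_-f|^s}{2})^{1/s}\big\rVert_p$ is decreasing in $s$ --- a step your proposal also omits). The paper does this by substituting the stationary-point relation $F'(r)=0$ into $F$ in two different ways, producing $F_1$ on $[0,\tfrac12]$ and $F_2$ on $[\tfrac12,1]$, proving each is monotone (Lemma~\ref{lm:monotonicity-functions-F1-and-F2}), reducing to $F_1(\tfrac12)\le0$ and $F_2(\tfrac12)\le0$, and then establishing these via a delicate chain of convexity/concavity and tangent/secant-line estimates (the $\omega_1,\omega_2$ analysis and Lemmas \ref{lm:monotonicity-of-omega-2}--\ref{lm:reverse-riesz-inequalities-convexity-of-the-second-part-of-inequality}). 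Your proposal gestures at "two auxiliary expressions that agree at the stationary points and are monotone," which is indeed the right idea, but you explicitly concede that constructing and verifying them is "the main obstacle"; none of that work is done. As it stands the proposal re-derives the reduction the paper imports by citation and leaves the theorem's actual proof as a plan rather than an argument.
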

 Recent results on this topic include those by Melentijević \cite{melentijevic-reverse-2025} where he addressed the case where \(p \in (1,2] \cup [4,\infty),\) and \(s \geq \min \{ p,\frac{p}{p-1}\}.\) Similar estimates with  additional perturbational constants are proven for \(1<p<2\) in \cite{kalaj2023mrieszconjugatefunction}.
\section{Proof of the Theorem \ref{tm:Ryesz-type-inequalities}}\label{main-section}
In paper \cite[Section 4]{Melentijevic_2022} it is proven that if the inequality holds for some value of \(s=s_0\) then it holds for  \(s <  s_0.\) Therefore, it is enough to prove it for \(s=\frac{p}{p-1}.\) Using the same method that was used in papers \cite{Hollenbeck_Verbitsky_2000,Hollenbeck_Verbitsky_2010,Kalaj_2019,Melentijevic_2022}, it is easy to see that the estimate follows from the inequality:
      \begin{equation}\label{main_inequality1}
        -\left( \frac{\left\lvert z \right\rvert ^s + \left\lvert w \right\rvert ^{s}}{2} \right) ^{\frac{p}{s}}+ \frac{\left\lvert z+\overline{w} \right\rvert ^{p}}{2^{p}\cos^p \frac{\pi}{2p}}-\tan \frac{\pi}{2p}\mathrm{Re}\, (zw)^{\frac{p}{2}}\geq 0,
      \end{equation}
    \(z \in \mathbb{C},w \in \mathbb{C}, p \in \left( \frac{4}{3},2 \right) ,s=\frac{p}{p-1}.\)
    Indeed, the  function \(E(z,w)= \mathrm{Re} \, (zw)^{p/2} \) is plurisubharmonic for \(p \in (1,2]\) (proof of that can be found in \cite[Lemma 2.2]{Hollenbeck_Verbitsky_2000}). When \(f\) is a trigonometric polynomial then the function \(E(P_+f(\xi), \overline{P_-f(\xi)})\) is a subharmonic function for \(\xi \in \mathbb{C},\) which follows from \cite[Theorem 4.13]{MRange1986}. Therefore, 
    \[
      0 = E(P_+f(0),0) = E(P_+f(0),P_+f(0)) \leq  \int_{0}^{2\pi}E(P_+f(e^{it}), \overline{P_-f(e^{it})})\: \frac{dt}{2\pi}.
    \]
    Taking \(z=P_+f(re^{it}),w=\overline{P_-f(re^{it})},\)  and integrating the inequality (\ref{main_inequality1}) over \(\mathbb{T}\) we easily get that the estimate follows when \(f\) is a trigonometric polynomial. Since \(P_+\) and  \(P_-\) are bounded operators for \(1<p<\infty\) we obtain that the estimate holds for all \(f \in L^p(\mathbb{T}).\)

    Dividing the previous equation with \(\max \{ \left\lvert z \right\rvert ^p,\left\lvert w \right\rvert ^p\}\) and writing \(\overline{zw}/\left\lvert z \right\rvert ^2\) in the polar form, we obtain that it is enough to prove 
      \begin{equation}\label{simplified-main-inequality}
        \Phi(r,t)=-\left( \frac{1+r^{s}}{2} \right) ^{\frac{p}{s}}+ \frac{\left( 1+r^2+2r \cos t \right) ^{\frac{p}{2}}}{2^{p}\cos^{p}\frac{\pi}{2p}} - r^{\frac{p}{2}}\tan \frac{\pi}{2p}\cos \frac{tp}{2} \geq 0,
      \end{equation}  
    for \(0\leq  r \leq  1, -\pi \leq  t \leq  \pi.\) It is proved in \cite[Subsection 4.1]{Melentijevic_2022} that the inequality is valid for: \(t \geq \frac{\pi}{p}, r \in [0,1],\)  stationary points of \(\Phi\) in the rectangle \((r,t) \in [0,1]\times [0,\pi/p],\) and the boundary of the rectangle \((r,t) \in [0,1]\times [0,\pi/p],\) except for boundary where \(t=0.\) Here we prove the previous inequality for \(t=0,\) i.e.
\begin{equation}\label{main_inequality2}
  F(r)=-\left( \frac{1+r^s}{2(1+r)^s} \right) ^{\frac{p}{s}} + \frac{1}{2^p \cos^p (\frac{\pi}{2p})}- \left( \frac{\sqrt{r}}{1+r} \right) ^p \tan \left( \frac{\pi}{2p} \right) \geq 0
\end{equation}
for each \(p \in (\frac{4}{3},2),\) \(s=\frac{p}{p-1}\) and \(r \in [0,1].\) 
From 
\[F'(r)=\frac{p}{2}\frac{r^{\frac{p}{2}-1}(1-r)}{(1+r)^{p+1}}\left( \frac{1-r^{s-1}}{1-r}\left( \frac{1+r^s}{2} \right) ^{\frac{p}{s}-1}  \frac{1}{r^{\frac{p}{2}-1}}  - \tan\frac{\pi}{2p} \right) ,\]
we see that \(F'(0+)\cdot F'(1-)\leq  0,\) so we have that the minimum of the function \(F\) occurs  at \(r=0, r=1\) or at points where \(F'(r)=0.\) Points where \(F'(r)=0\)  are those where the equation
\begin{equation}\label{equation_which_we_got_from_derivative-of_f}
  \frac{1-r^{s-1}}{1-r}\left(\frac{1+r^s}{2} \right) ^{\frac{p}{s}-1}  \frac{1}{r^{\frac{p}{2}-1}}  = \tan\frac{\pi}{2p}
\end{equation}
holds. It is sufficient to prove that \(F(0) \geq 0,F(1)\geq 0\) and \(F(r) \geq 0\) for \(r\) which satisfy the equation (\ref{equation_which_we_got_from_derivative-of_f}). Proofs of the inequalities \(F(0) \geq 0, F(1) \geq 0\) can be found in \cite{verbitsky_1984,Melentijevic_2022,Kalaj_2019}. 
The main idea of this paper is to use the identity (\ref{equation_which_we_got_from_derivative-of_f}) in two different ways. In order to prove the inequality \(F(r)\geq 0\) for those \(r\) for which \(F'(r)=0\) we will consider two cases:\\
  \textbf{Case \(r \leq  \frac{1}{2}.\)} In this case, we replace \(\tan \frac{\pi}{2p}\) in the inequality (\ref{main_inequality2}) with the expression on the right side of (\ref{equation_which_we_got_from_derivative-of_f}). After replacement we arrive at the inequality
  \[
    F_1(r)=\frac{1}{2^p\cdot \cos^p \frac{\pi}{2p}} - \left( \frac{1+r^s}{2} \right)^{\frac{p}{s}-1} \frac{1}{(1+r)^{p-1}}\cdot \frac{1-r^s}{2(1-r)} \geq 0; 
    \]
  \textbf{Case \(r \geq  \frac{1}{2}.\)} Similarly, as in the case where \(r\leq  \frac{1}{2},\) one can see that it is enough to prove
  \[
    F_2(r)=\frac{1}{2^p \cdot \cos^p \frac{\pi}{2p}}-\tan\frac{\pi}{2p} \cdot r^{\frac{p}{2}-1}\frac{1}{(1+r)^{p-1}}\cdot \frac{1-r^s}{2(1-r^{s-1})}\geq 0.
    \]

  \subsection{Proof of the first inequality}
  Let us prove that \(F_1\)
  is a  decreasing function with respect to \(r.\) One can easily see that 
    \[
    F_1'(r)=\frac{\left(r^s+1\right)^{\frac{p}{s}-2} \left(p \left( 1-r \right) \left( r^s-1 \right) \left( r^s-r \right)   + 2s r\left( r^{s-1}-r^{s+1} \right) +2r(r^{2s}-1)\right)}{2^\frac{p}{s}(r-1)^2 r(r+1)^p}.
  \]
  \begin{lemma}\label{lm:monotonicity-functions-F1-and-F2}
    The inequalities
    \begin{gather*}
      p \left( 1-r \right) \left( r^s-1 \right) \left( r^s-r \right)   + 2s r\left( r^{s-1}-r^{s+1} \right) +2r(r^{2s}-1)\leq  0, \quad  p \in \left[ 4/3,2 \right] ,s \in [2,4];\\
      p \left( 1-r \right) \left( r^s-1 \right) \left( r^s-r \right)   + 2s r\left( r^{s-1}-r^{s+1} \right) +2r(r^{2s}-1)\geq 0, \quad  p \in [2,4],s \in \left[ 4/3,2 \right] 
    \end{gather*}
    holds for all \(r \in  [0,1].\) Note that in this lemma \(p\) and \(s\) are not restricted with  any constraint.

  \end{lemma}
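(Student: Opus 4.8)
The plan is to reduce both inequalities to a single statement with $p$ set equal to $2$, and then to analyze the resulting expression as a function of $s$ for fixed $r$. Discarding the trivial case $r=0$ (where both left-hand sides vanish) and dividing by $r$, the factorizations $r^s-1=-(1-r^s)$, $r^s-r=-r(1-r^{s-1})$, $r^{s-1}-r^{s+1}=r^{s-1}(1-r^2)$ and $r^{2s}-1=-(1-r^{2s})$ turn the expression in the lemma into
\[
  \widetilde H(r)=p\,A(r)+B_0(r),\qquad A(r):=(1-r)(1-r^s)(1-r^{s-1}),\quad B_0(r):=2s\,r^{s-1}(1-r^2)-2(1-r^{2s}).
\]
For $r\in[0,1]$ and $s\ge 1$ the quantity $A(r)$ is a product of three nonnegative factors, so $A(r)\ge 0$ and $\widetilde H$ is nondecreasing in $p$; hence in the first case ($p\le 2$) it suffices to prove $\widetilde H\le 0$ at $p=2$, and in the second case ($p\ge 2$) to prove $\widetilde H\ge 0$ at $p=2$. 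Since $\widetilde H$ equals $2K(r,s)$ at $p=2$, where
\[
  K(r,s):=(1-r)(1-r^s)(1-r^{s-1})+s\,r^{s-1}(1-r^2)-(1-r^{2s}),
\]
the lemma reduces to proving $K(r,s)\le 0$ for $s\in[2,4]$ and $K(r,s)\ge 0$ for $s\in[4/3,2]$, for all $r\in[0,1]$.

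The heart of the matter is the pair of identities $K(r,1)\equiv 0$ and $K(r,2)\equiv 0$, each a one-line check (for $s=2$, $K(r,2)=(1-r^2)[(1-r)^2+2r-(1+r^2)]=0$), together with the computation
\[
  \partial_s K(r,s)=r^{\,s-1}\,B(r,s),\qquad B(r,s):=(1-r^2)\bigl(1+(s-1)\ln r\bigr)+2r^s\ln r,
\]
in which almost every term collapses. As $r^{\,s-1}>0$ for $r\in(0,1)$, the sign of $\partial_s K$ is that of $B$. Differentiating once more, $\partial_s B=(\ln r)\bigl[(1-r^2)+2r^s\ln r\bigr]$; since $s\mapsto(1-r^2)+2r^s\ln r$ has positive derivative $2r^s(\ln r)^2$, for $s\ge 1$ the bracket is at least its value at $s=1$, namely $(1-r^2)+2r\ln r$, and the substitution $r=1/v$ shows the latter has the sign of $\psi(v):=v^2-2v\ln v-1$, for which $\psi(1)=0$ and $\psi'(v)=2(v-1-\ln v)>0$ on $(1,\infty)$. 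Thus $(1-r^2)+2r^s\ln r>0$, so $\partial_s B<0$ and $B(r,\cdot)$ is strictly decreasing on $[1,\infty)$ for every fixed $r\in(0,1)$.

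It remains to pin down the sign of $B$ at the two endpoints: $B(r,1)=(1-r^2)+2r\ln r>0$ by the previous step, while $B(r,2)=(1-r^2)(1+\ln r)+2r^2\ln r<0$, which under $r=e^{-u}$ is the inequality $(e^{2u}-1)(1-u)<2u$ for $u>0$; this holds because $\phi(u):=2u-(e^{2u}-1)(1-u)$ satisfies $\phi(0)=\phi'(0)=0$ and $\phi''(u)=4u\,e^{2u}>0$. Now both cases follow at once. For $s\in[2,4]$: $B$ is decreasing and $B(r,2)<0$, so $B(r,s)\le 0$, hence $\partial_s K\le 0$ and $K(r,s)\le K(r,2)=0$. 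For $s\in[4/3,2]$: $B$ decreases from the positive value $B(r,1)$ to the negative value $B(r,2)$, so it changes sign exactly once on $[1,2]$, from $+$ to $-$; therefore $K(r,\cdot)$ first increases and then decreases on $[1,2]$, and since $K(r,1)=K(r,2)=0$ it remains $\ge 0$ on all of $[1,2]\supseteq[4/3,2]$. Undoing the reductions (multiply back by $r\ge 0$, then use monotonicity in $p$) yields the two claimed inequalities.

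The main obstacle is not any single estimate — each of the one-variable inequalities above is elementary — but rather finding the correct reductions: noticing that division by $r$ makes the expression affine in $p$ with nonnegative slope, so that $p=2$ is extremal in both directions; producing the clean factorization of $\partial_s K$; and realizing that $K$ is only unimodal, not monotone, in $s$ on $[4/3,2]$, so that both boundary identities $K(r,1)\equiv K(r,2)\equiv 0$ are genuinely needed.
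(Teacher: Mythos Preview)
Your proof is correct. Both you and the paper begin with the same reduction: the expression is affine and nondecreasing in $p$ (the coefficient of $p$ being $r(1-r)(1-r^s)(1-r^{s-1})\ge 0$), so it suffices to treat $p=2$. From there the two arguments diverge genuinely.

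The paper fixes $s$ and studies the resulting function of $r$: it rewrites $E\big|_{p=2}=-2r^2\varphi(r)$ with $\varphi(r)=(1-r^{2s-2})+(1-s)(r^{s-2}-r^s)$, then shows $\varphi'(r)=(s-1)r^{s-3}\psi(r)$ with $\psi(r)=-2r^s+sr^2-s+2$, and argues that $\psi$ is monotone in $r$ with $\psi(1)=0$, forcing the sign of $\varphi'$; since $\varphi(1)=0$, this settles the sign of $\varphi$. The two ranges $s\in[2,4]$ and $s\in[4/3,2]$ flip the sign of $r-r^{s-1}$ and hence of $\psi'$, giving the two inequalities separately.

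You instead fix $r$ and vary $s$: the factorization $\partial_s K=r^{s-1}B(r,s)$ and the strict decrease of $B$ in $s$, together with the boundary identities $K(r,1)=K(r,2)=0$, yield both cases in one stroke --- monotonicity of $K$ on $[2,4]$ from $B(r,2)<0$, and unimodality on $[1,2]$ from the single sign change of $B$. Your route is arguably cleaner conceptually (one computation of $\partial_s K$, one monotonicity statement for $B$), and it actually proves slightly more: $K(r,s)\ge 0$ on all of $[1,2]$ and $K(r,s)\le 0$ on all of $[2,\infty)$, not merely on the stated subintervals. The paper's route, on the other hand, avoids the second-order analysis of $B$ and the two auxiliary inequalities $B(r,1)>0$, $B(r,2)<0$, trading them for a single monotonicity in $r$.
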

  \begin{proof}\textbf{The first inequality.}
  The expression is  increasing with respect to \(p.\)  Taking \(p=2,\) we conclude that it is enough to prove that
    \(
      2r^2 \varphi(r)\geq 0,
    \)
    where \(\varphi(r)= (1-r^{2s-2}) +(1-s)(r^{s-2}-r^{s}).\)
    Since
    \(
      \varphi '(r)=(s-1)r^{s-3}(-2r^{s}+s r^2 -s+2),
    \)
    the monotonicity of \(\varphi\) depends on a sign of the expression \(\psi(r)=-2r^s+s r^2 -s+2.\)  As \(\psi '(r)= -2sr^{s-1}+2sr=2s(r-r^{s-1})\geq 0,\) we infer that \(\psi\) increases with respect to \(r.\) Given that \(\psi(1) =0, \) we conclude that \(\psi(r) \leq  0,\) for  \(r \in [0,1],s \in [2,4]. 
    \)
    So, we have that \(\varphi\) 
     decreases with respect to \(r,\) and \(\varphi(1)=0,\) which implies that \(\varphi(r)\geq 0.\)

     \textbf{The second inequality.}
     The expression is increasing with respect to \(p\), therefore, it is sufficient to prove the inequality for \(p=2,\) i.e., \(2r^2 \left( 2 r^{2 s-2}-2 \left(r^2-1\right) (s-1) r^{s-2}-2\right)=r^2 \varphi(r)\geq 0.\) Since \(\varphi ' (r) =  (s-1) r^{s-3} \left(2 r^s-r^2 s+s-2\right),\) and the function \(r \mapsto 2 r^s-r^2 s+s-2\)  is increasing equals zero at \(r=1\) we conclude that \(\varphi\) is decreasing. Given that \(\varphi(1)=0\) we deduce that \(\varphi(r) \geq 0,\) for all \(r \in [0,1].\) 
  \end{proof} 
  Based on the previous lemma, we deduce that the function \(F_1\) is  decreasing on \(r,\) hence, we are reduced  to prove that \(F_1(1/2)\geq 0,\) i.e.,
  \[
    \frac{1}{2^p \cos^p \frac{\pi}{2p}}-\frac{1}{6^{p-1}}\cdot \frac{2^s-1}{(2^s+1)^{2-p}}\geq 0.
  \]
  It is equivalent with
  \[
    -p \log \left( 2\cos \frac{\pi}{2p} \right) +  (p-1) \log 6 - \log \left( 2^s-1 \right)  +(2-p) \log \left( 2^s+1 \right) \geq 0,
  \]
  which, after  using \(p=\frac{s}{s-1},\) can be rewritten as 
    \begin{equation}\label{the_key_form}
      \varphi_1 (s)=-(s-1) \log \left(2^s-1\right)+(s-2) \log \left(2^s+1\right)-s \log \left(2 \sin \left(\frac{\pi }{2 s}\right)\right)+\log (6)\geq 0.
    \end{equation}
  Let's prove that \(\varphi\) is convex for \(s \in [2,4].\) Its second derivative equals to
    \begin{equation}\label{eq:the-second-derivative-of-varphi-1}
        \frac{\pi ^2 \csc ^2\left(\frac{\pi }{2 s}\right)}{4 s^3}+\frac{2^s \log 2 \left(-4^{s+1}+2^{s+1} \log 2+s \left(2^{2 s+1} \log 2+\log 4\right)-4^s \log 8+4-\log 8\right)}{\left(4^s-1\right)^2}.
      \end{equation}
    It is straightforward to show that the function
    \(
       s \mapsto \frac{\pi ^2 \csc ^2\frac{\pi }{2 s}}{4 s^2}
    \)
    is  decreasing with respect to \(s\) by showing that the function \( s \mapsto \frac{\pi \csc \frac{\pi}{2s}}{2s}\) is a non-negative,  decreasing function. Thus, we have 
    \begin{equation}\label{auxiliary-inequality-the-first-inequality}
      \frac{\pi ^2 \csc ^2\frac{\pi }{2 s}}{4 s^2} 
      \geq
      \left.\frac{\pi ^2 \csc ^2\frac{\pi }{2 s}}{4 s^2}\right\vert_{s=4}
      =
      \frac{1}{64} \pi ^2 \csc ^2\frac{\pi }{8}
      =
      \frac{\pi ^2}{16 \left(2-\sqrt{2}\right)}.
    \end{equation}
    Using the substitution \(2^s = t \in [4,16],\) the expression
    \begin{equation}\label{auxiliary_inequality_with_substitution}
    -\frac{ s 2^s \log 2 \left(-4^{s+1}+2^{s+1} \log 2+s \left(2^{2 s+1} \log 2+\log 4\right)-4^s \log 8+4-\log 8\right)}{\left(4^s-1\right)^2}
    \end{equation}
    takes form
    \begin{equation}\label{auxiliary-inequality-after-substitution}
         -\frac{t \log t}{t^2-1} \cdot  \frac{\left(2 \left(t^2+1\right) \log t+t (\log 4-t (4+\log 8))+4-\log 8\right)}{\left(t^2-1\right)}.
    \end{equation}
    Hence, the expression (\ref{auxiliary_inequality_with_substitution})  decreases with respect to \(s\) if and only if the  expression (\ref{auxiliary-inequality-after-substitution})  decreases with respect to \(t.\)

    Since 
    \[
      \frac{\partial }{\partial t} \left( \frac{t \log t}{t^2-1} \right) = \frac{t^2-\left(t^2+1\right) \log t-1}{\left(t^2-1\right)^2},
    \]
    and since \(t^2-\left(t^2+1\right) \log t-1 = t^2 (1-\log t)-1-\log t < 0\) for  \(t > e,\) we see that the function \(t \mapsto \frac{t \log t}{t^2-1}\) is  decreasing. 

    By differentiating the expression
\begin{equation}\label{monotonicity-the-rest-of-the-expression}
       -\frac{2 \left(t^2+1\right) \log t+t (\log 4-t (4+\log 8))+4-\log 8}{\left(t^2-1\right)}
\end{equation}
     we get
    \[
      \frac{-2 t^4+t^3 \log 4-4 t^2 \log 8+8 t^2 \log t+t \log 4+2}{t \left(t^2-1\right)^2},
    \]
    thus, for proving monotonicity of the expression (\ref{monotonicity-the-rest-of-the-expression}) we need to prove 
    \[
      \psi(t)=-2 t^4+t^3 \log 4-4 t^2 \log 8+8 t^2 \log t+t \log 4+2\leq 0
    \]
    for \(t \in[4,16].\) Seeing that,
     \begin{equation*}
       \begin{gathered}
        \psi''(t)= -24 t^2+6 t \log 4+16 \log t+24-8 \log 8 
        \\ \leq  
        -24 \cdot 4 ^2+6 \cdot 16 \log 4 + 16 \log 16 + 24-8 \log 8 
        \\= -360+232 \log 2 <0 , \quad t \in [4,16],
       \end{gathered}
     \end{equation*}   
    we infer that \(\psi'\)  decreases for \(t \in [4,16].\)
    Since
    \begin{equation*}
      \begin{gathered}
        \psi'(4)
        = 
        65 \log 4-480<0
        \quad \text{and} \quad
        \psi(4)= -510+200 \log 2< 0,
      \end{gathered}
    \end{equation*}
    we have \(\psi(t) \leq  0\) for \(t \in [4,16].\)
    As the expression (\ref{monotonicity-the-rest-of-the-expression})
    decreases on \(t\) for its non-negativity,
     it is enough to prove its non-negativity for \(t = 16.\) Taking \(t=16\) we get 
    \[
      \frac{1}{255} (-4+\log 8-16 (\log 4-16 (4+\log 8))-514 \log 16) = 4-\frac{439 \log 2}{85} >0, 
    \]
    so we deduce that the expression (\ref{monotonicity-the-rest-of-the-expression}) is  decreasing and non-negative.

    Using the previous conclusions  we get that the expression (\ref{auxiliary-inequality-after-substitution}) decreases in \(t\) as a product of two non-negative  decreasing expressions. Thus, we have that the expression  (\ref{auxiliary_inequality_with_substitution}) decreases with respect to \(s\) thus giving
    \begin{equation*}
      \begin{gathered}
        \frac{ s 2^s \log 2 \left(-4^{s+1}+2^{s+1} \log 2+s \left(2^{2 s+1} \log 2+\log 4\right)-4^s \log 8+4-\log 8\right)}{\left(4^s-1\right)^2}
        \\ \geq 
        \frac{8}{45} \log 2 (\log 32-12).
      \end{gathered}
    \end{equation*}
    Using the previous inequality and (\ref{auxiliary-inequality-the-first-inequality}), we deduce
    \[
      \varphi_1 ''(s) \geq \frac{1}{s}\cdot \left( \frac{\pi^2}{16 (2-\sqrt{2})} + \frac{8}{45} \log 2 (\log 32-12) \right)  >0,
    \]
    thus, the function \(\varphi_1 \) is convex. 
  Since \(\varphi_1 '(2) = \frac{\pi }{4}-\frac{11}{6}  \log 2-\log 3+\log 5 > 0\) and \(\varphi_1 (2)=0\), we obtain \(\varphi_1 (s) \geq 0,\) for \(s \in [2,4].\)
  \subsection{Proof of the second inequality}
  One can easily show that 
  \[
  F_2'(r)=-\frac{r^{\frac{p}{2}-1} \tan \left(\frac{\pi }{2 p}\right) \left(p \left( 1-r \right) \left( r^s-1 \right) \left( r^s-r \right)   + 2s r\left( r^{s-1}-r^{s+1} \right) +2r(r^{2s}-1)\right)}{4 \left(r-r^s\right)^2(1+r)^p}
\]
  From the Lemma \ref{lm:monotonicity-functions-F1-and-F2} it is straightforward to see that \(F_2'(r)\) is non-negative, thus \(F_2\) is  increasing in \(r \in [1/2,1].\) Therefore, it is enough to prove that \(F_2\left( 1/2 \right) \geq 0,\) i.e.,
  \[
    \frac{1}{2^p \cos^p \frac{\pi}{2p}}- \tan \frac{\pi}{2p}\cdot \frac{2^s-1}{2^s-2}\cdot \frac{2^{\frac{p-2}{2}}}{3^{p-1}}\geq 0, \quad p \in \left[ \frac{4}{3},2 \right] .
  \]
    Using the chain rule, we calculate  the second derivative of the function \( p \mapsto \frac{2^{s(p)}-1}{2^{s(p)}-2},\) where \(s(p)=\frac{p}{p-1}:\)
      \begin{equation*}
        \begin{gathered}
            \frac{2^s (s-1)^3 \log 2 \left(-2^{s+1}-2^s \log 2+s \left(2^s \log 2+\log 4\right)+4-\log 4\right)}{\left(2^s-2\right)^3}.
        \end{gathered}
      \end{equation*}
    So,  to establish the convexity of the function \(p \mapsto \frac{ 2^{s(p)}-1}{2^{s(p)}-2},\) we need to prove  
    \[
      -2^{s+1}-2^s \log 2+s \left(2^s \log 2+\log 4\right)+4-\log 4\geq 0, \quad s \in [2,4],
    \]
     i.e. 
    \(
      -t (2+\log 2)+(t+2) \log t+4-\log 4 \geq 0,  t \in [4,16]
    \) (substitution \(2^s=t\)).
    Since,
      \begin{gather*}
        \left( \frac{\partial }{\partial t}  \right) ^2 \left( -t (2+\log 2)+(t+2) \log t+4-\log 4 \right)  = \frac{t-2}{t^2}>0,\\
        \left. \frac{\partial }{\partial t} \right\vert_{t=4} \left( -t (2+\log 2)+(t+2) \log t+4-\log 4 \right)  = \frac{1}{2}\left( \log 4-1 \right) > 0
      \end{gather*}
    we obtain  \(-t (2+\log 2)+(t+2) \log t+4-\log 4\) is  increasing on \(t\). Its value for \(t=4\) is \(\log 64-4> 0,\) which means that the function \(p \mapsto \frac{2^{s(p)-1}}{2^{s(p)-2}}\) is convex. Therefore, estimating this function by its secant line we have
    \[
    \frac{2^s-1}{2^s-2} \leq \frac{9 p}{14}+\frac{3}{14}
  \]
  holds.
  Hence, in order to prove \(F_2(1/2)\geq 0\) it is enough to show:
    \begin{equation}\label{subinequalities-of-second-inequality}
      \frac{1}{2^p \cos ^p \frac{\pi}{2p}}-\tan \frac{\pi}{2p}\cdot \frac{2^{s}-1}{2^{s}-2}\cdot \frac{2^{\frac{p-2}{2}}}{3^{p-1}}\geq 0
   \quad \text{or} \quad
      \frac{1}{2^p \cos^p \frac{\pi}{2p}}- \tan \frac{\pi}{2p}\left( \frac{9}{14}p +\frac{3}{14} \right) \cdot \frac{2^{\frac{p-2}{2}}}{3^{p-1}} \geq 0.
    \end{equation}
  \subsubsection{Proof of the second inequality for \texorpdfstring{\(p \in [5/3,2] \Leftrightarrow s \in [2,5/2] \)}{p in [5/3,2] or s in[2,5/2]}}
Logarithming the first inequality in  (\ref{subinequalities-of-second-inequality}) and using  \( p = \frac{s}{s-1},\) we are reduced to prove 
\[
  -s \log 2+\log \frac{9}{4}-2 s \log \left(\sin \frac{\pi }{2 s}\right)-2 (s-1) \log \left(\cot \frac{\pi }{2 s}\right)-2 (s-1) \log \left(\frac{2^s-1}{2^s-2}\right)  \geq 0
\] 
for \(s \in [2,5/2].\)

Since the second derivative of the function \(s \mapsto -2 (s-1) \log \left(\frac{2^s-1}{2^s-2}\right)\) equals
    \begin{equation*}
      \begin{gathered}
          -\frac{2^{s+1} \log 2 \left(3\cdot 2^{s+1}-2^{2 s+1}-4^s \log 2+4^s s \log 2-s \log 4-4+\log 4\right)}{\left(2^s-2\right)^2 \left(2^s-1\right)^2},
      \end{gathered}
    \end{equation*}
    proving
    \[
      3\cdot 2^{s+1}-2^{2 s+1}-2^{2 s} \log 2+2^{2 s} s \log 2-s \log 4-4+\log 4 \leq  0,  \quad s \in [2,5/2],
    \]
     we will infer the convexity of function \(s \mapsto -2 (s-1) \log \left(\frac{2^s-1}{2^s-2}\right).\) It is enough to (using substitution \(2^s=t\))
    \begin{equation}\label{izraz1}
     - t^2 (2+\log 2)+\left(t^2-2\right) \log t+6 t-4+\log 4 \leq  0, \quad t \in [4,6].
    \end{equation}
    The second derivative of the LHS is
    \(
      \frac{2}{t^2}+2 \log \left(\frac{t}{2}\right)-1.
    \)
    Since \(t \geq 4,\) we have that \( \log \frac{t}{2}\geq \log 2,\) which gives 
    \(
      \frac{2}{t^2}+2 \log \left(\frac{t}{2}\right)-1 \geq  \frac{2}{t^2}+\log 4 -1 >0.
    \)
    Therefore, expression (\ref{izraz1}) convex on \(t.\) For \(t=4\) and \(t=6\) it takes values \(-12+14 \log 2<0\) and \(34 \log 3-40<0\), thus concluding proof of (\ref{izraz1}).

    The function
 \(
  s \mapsto -2 (s-1) \log \left(\frac{2^s-1}{2^s-2}\right) 
 \)
 is convex, there for estimating it by tangent line we get
 \begin{equation}\label{the-second-inequality-convexity-tangent-line}
   -2 (s-1) \log \left(\frac{2^s-1}{2^s-2}\right) 
   \geq
   -\log \left(\frac{9}{4}\right)  +\log \left(\frac{8 \sqrt[3]{2}}{9}\right) (s-2), \quad s \in [2,5/2].
 \end{equation}

  The second derivative of the expression \(-2 s \log \left(\sin \frac{\pi }{2 s}\right)-2 (s-1) \log \left(\cot \frac{\pi }{2 s}\right)\) is given by 
   \begin{equation*}
     \begin{gathered}
       \frac{\pi  \csc \frac{\pi }{s} \left(s \left(\pi  \csc \frac{\pi }{s}-4\right)-\pi  (s-2) \cot \frac{\pi }{s}\right)}{s^4}.
     \end{gathered}
   \end{equation*} 
 Its negativity follows from the Lemma \ref{auxiliary-lemma-1-second-inequality}.
Therefore, 
\begin{equation*}
   s \mapsto -s \log 2+\log \frac{9}{4}-2 s \log \left(\sin \left(\frac{\pi }{2 s}\right)\right)-2 (s-1) \log \left(\cot \left(\frac{\pi }{2 s}\right)\right)
\end{equation*}
is concave for \(s \in [2,5/2].\) 
Hence, it can be estimated from the below by 
\[
  \log \left(\frac{16}{5} \left(7-3 \sqrt{5}\right)\right)  (s-2 ) + \log\frac{9}{4}.
\]

 Finally, we obtain
 \begin{equation*}
   \begin{gathered}
    -s \log 2+\log \frac{9}{4}-2 s \log \left(\sin \left(\frac{\pi }{2 s}\right)\right)-2 (s-1) \log \left(\cot \left(\frac{\pi }{2 s}\right)\right)-2 (s-1) \log \left(\frac{2^s-1}{2^s-2}\right) 
    \\ \geq 
    (s-2) \left( \log \left(\frac{16}{5} \left(7-3 \sqrt{5}\right)\right) +\log \left(\frac{8 \sqrt[3]{2}}{9}\right) \right)  \geq 0, \quad s \in [2,5/2].
   \end{gathered}
 \end{equation*}
 With this, we have proven the inequality for \(s \in [2,5/2],\) which corresponds to \(p \in [5/3,2].\)
 \subsubsection{Proof of the second inequality for \texorpdfstring{\(p \in [4/3,5/3] \Leftrightarrow s \in [5/2,4]\)}{p in [4/3,5/3] or s in [5/2,4]}}
  In this case we prove the inequality using the sharper inequality (the second inequality in (\ref{subinequalities-of-second-inequality}))
  \[
    \frac{1}{2^p \cos^p \frac{\pi}{2p}}- \tan \frac{\pi}{2p}\left( \frac{9}{14}p +\frac{3}{14} \right) \cdot \frac{2^{\frac{p-2}{2}}}{3^{p-1}} \geq 0.
  \]
  Logarithming the previous inequality and using \(p= \frac{s}{s-1}\) we transform it to the form \(\varphi(s) \geq 0,\) where
  \begin{equation*}
    \varphi(s)=-s \log 2+\log \frac{9}{4}-2 s \log \left(\sin \frac{\pi }{2 s}\right)-2 (s-1) \log \left(\cot \frac{\pi }{2 s}\right)-2 (s-1) \log \left(\frac{12s-3}{14(s-1)}\right).
  \end{equation*}
   We easily find
  \begin{equation*}
    \begin{gathered}
      \varphi''(s)=\frac{1}{s^{3}}\left( \frac{\pi  \csc \frac{\pi }{s} \left(s \left(\pi  \csc \frac{\pi }{s}-4\right)-\pi  (s-2) \cot \frac{\pi }{s}\right)}{s}+\frac{18s^{3}}{(1-4 s)^2 (s-1)} \right).
    \end{gathered}
  \end{equation*}
  Using the Lemma \ref{auxiliary-lemma-1-second-inequality} we obtain that the first term in the parentheses takes the biggest value for \(s=2,\) i.e., 
  \[
    \frac{\pi  \csc \frac{\pi }{s} \left(s \left(\pi  \csc \frac{\pi }{s}-4\right)-\pi  (s-2) \cot \frac{\pi }{s}\right)}{s} \leq  (\pi -4) \pi.
  \]
  On the other side, since 
  \[
      \left( \frac{\partial }{\partial s}  \right) \left( \frac{18s^3}{(1-4 s)^2 (s-1)}\right)  
      = 
      \frac{54 s^2 (1-2 s)}{(s-1)^2 (4 s-1)^3}< 0,
    \]
  we conclude that the second term in the parentheses decreases with respect to \(s\), so it takes the  biggest value for \(s=5/2,\) i.e.,
  \(
     \frac{18s^3}{(1-4 s)^2 (s-1)} \leq  \frac{125}{54}.
  \)
  Since  
  \(
    (\pi -4) \pi +\frac{125}{54}, \leq  0
  \)
  the function \(\varphi\) is concave for \(s \in [5/2,4]\). Noticing that \(\varphi(5/2)=\log \left(\frac{343}{81} \sqrt{\frac{7}{5}-\frac{3}{\sqrt{5}}}\right) > 0\) and 
 \[
  \varphi(4)=
-\log \left(\frac{1265625}{117649}\right) + \log \left( 64(3-2\sqrt{2}) \right)  = \log \left( \frac{117649}{1265625} \cdot 64 (3-2\sqrt{2}) \right) >0,
 \]
 we obtain that \(\varphi(s) \geq 0\) for \(s \in [5/2,4]\Leftrightarrow p \in [4/3,5/3] .\) 
  \begin{lemma}\label{auxiliary-lemma-1-second-inequality}
    The function 
    \[
      s \mapsto\frac{ \pi  \csc \frac{\pi }{s}\left(s \left(\pi  \csc \frac{\pi }{s}-4\right)-\pi  (s-2) \cot \frac{\pi }{s}\right)}{s}
    \]
    is negative and  decreasing for \(s \in [2,4].\)
  \end{lemma}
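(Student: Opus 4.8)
To prove Lemma~\ref{auxiliary-lemma-1-second-inequality} I would first substitute $x=\pi/s$, so that $x$ ranges over $[\pi/4,\pi/2]$ as $s$ ranges over $[2,4]$. Using $\csc\frac{\pi}{s}/s=\frac{x}{\pi}\csc x$, a routine simplification rewrites the function in the statement as
\[
  G(x)=\pi\csc x\bigl(\pi\csc x-4-(\pi-2x)\cot x\bigr)=\pi\csc^{2}x\bigl(\pi-4\sin x-(\pi-2x)\cos x\bigr)=-\frac{\pi\,q(x)}{\sin^{2}x},
\]
where $q(x):=4\sin x-\pi+(\pi-2x)\cos x$. Since $s\mapsto\pi/s$ is decreasing, the lemma is equivalent to showing that on $[\pi/4,\pi/2]$ the function $q$ is positive — which gives $G<0$, hence the asserted negativity — and that $x\mapsto q(x)/\sin^{2}x$ is decreasing — which gives that $G$ is increasing, hence that $s\mapsto G(\pi/s)$ is decreasing.

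For the positivity of $q$ I would note that $q'(x)=2\cos x-(\pi-2x)\sin x$ and $q''(x)=-(\pi-2x)\cos x\le 0$ on $[\pi/4,\pi/2]$, so $q$ is concave there; since $q'(\pi/2)=0$, concavity forces $q'\ge 0$ on the whole interval, hence $q$ is increasing and $q(x)\ge q(\pi/4)=2\sqrt2-\pi+\tfrac{\pi\sqrt2}{4}>0$, the last inequality because $\sqrt2(8+\pi)>11\sqrt2>15>4\pi$.

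For the monotonicity of $q(x)/\sin^{2}x$, differentiating and clearing the positive factor $\sin^{3}x$ reduces the claim to $q'(x)\sin x-2q(x)\cos x\le 0$, and a short computation gives $q'(x)\sin x-2q(x)\cos x=-P(x)$ with
\[
  P(x)=6\sin x\cos x-2\pi\cos x+(\pi-2x)(1+\cos^{2}x).
\]
Thus it remains to prove $P\ge 0$ on $[\pi/4,\pi/2]$; since $P(\pi/2)=0$ it suffices to show $P$ is decreasing there, and I would do this by establishing that $P$ is concave together with $P'(\pi/4)<0$. One computes $P''(x)=2S(x)$ with $S(x)=\pi\cos x-4\sin 2x-(\pi-2x)\cos 2x$, and I would prove $S\le 0$ on $[\pi/4,\pi/2]$ by splitting at $\pi/3$: on $[\pi/4,\pi/3]$ the termwise bounds $\cos x\le\cos\frac{\pi}{4}$, $\sin 2x\ge\sin\frac{2\pi}{3}$ and $-(\pi-2x)\cos 2x\le\frac{\pi}{4}$ yield $S(x)\le\frac{\pi\sqrt2}{2}+\frac{\pi}{4}-2\sqrt3<0$, while on $[\pi/3,\pi/2]$ one has $S'(x)=-\pi\sin x-6\cos 2x+2(\pi-2x)\sin 2x\ge 12\sin^{2}x-\pi\sin x-6\ge 3-\frac{\pi\sqrt3}{2}>0$, so $S$ increases up to $S(\pi/2)=0$. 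Hence $P$ is concave; since $P'(\pi/4)=\pi\sqrt2-\frac{\pi}{2}-3<0$, it follows that $P'<0$ throughout, so $P$ is decreasing and $P\ge P(\pi/2)=0$, which finishes the argument.

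The computational reductions — that the stated function equals $-\pi q/\sin^{2}x$ and that $q'\sin x-2q\cos x=-P$ — are routine. The genuine difficulty is the sign of $P$, equivalently the inequality $S\le 0$, and this is delicate because $P$ is not monotone \emph{a priori} (indeed $P'$ is negative at both endpoints) and the available estimate for $S$ degenerates as $x\to\pi/2$; this is precisely why the interval must be split at $\pi/3$ and why one argues via concavity of $P$ rather than bounding $P'$ directly.
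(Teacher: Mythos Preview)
Your argument is correct. The substitution $x=\pi/s$ is the same as the paper's $t=1/s$ (up to the factor $\pi$), and your auxiliary $q(x)=4\sin x-\pi+(\pi-2x)\cos x$ is exactly $\sin x$ times the paper's $\psi$; in particular your positivity argument for $q$ coincides with the paper's proof that $\psi>0$.

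Where the two proofs differ is in the monotonicity step. The paper factors the expression as $\pi\csc(\pi t)\cdot\psi(t)$ and shows each factor is non-negative and decreasing separately; for $\psi$ this reduces to a single concavity check of the function $\omega(t)=-2\pi t+\sin(2\pi t)-\pi\cos(\pi t)+\pi$ with $\omega(1/4)\ge 0$, $\omega(1/2)=0$. You instead treat $q(x)/\sin^{2}x$ as a single quotient, which after differentiation leads to the sign of $P(x)=6\sin x\cos x-2\pi\cos x+(\pi-2x)(1+\cos^{2}x)$; establishing $P\ge 0$ then forces the further concavity analysis via $S$ and the split at $\pi/3$. Both routes are valid; the paper's factorization buys a shorter endgame (one concavity instead of two, no interval split), while your approach is more direct but pays for it with an extra layer of estimates.
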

  \begin{proof}
    Using the monotonicity of the function \(t \mapsto \frac{1}{t},t>0\) and multiplying the expression with \(-1\) we conclude that  it is sufficient to prove that 
    \(
      \pi  \csc (\pi  t) (-1)(\pi  (2 t-1) \cot (\pi  t)+\pi  \csc (\pi  t)-4)
    \)
    is  non-negative and decreases for \(t \in [1/4,1/2].\) Since the function
    \(
      t \mapsto \pi  \csc (\pi  t)
    \)
    is  decreasing and non-negative, it is enough to show the same properties for the function
    \(
      \psi(t)= (-1)(\pi  (2 t-1) \cot (\pi  t)+\pi  \csc (\pi  t)-4).
    \)
    Monotonicity is implied from
    \(\psi '(t)=-\pi  \csc ^2(\pi  t) (-2 \pi  t+\sin (2 \pi  t)-\pi  \cos (\pi  t)+\pi ) \leq 0,
    \)
    i.e.  \(\omega(t)=-2 \pi  t+\sin (2 \pi  t)-\pi  \cos (\pi  t)+\pi \geq 0.\)  Additionally, from
    \[
      \omega ''(t)=  -2\pi t-\sin \left( 2\pi t \right) -\pi\cos \left( \pi t \right)  = \pi ^2 (\pi -8 \sin (\pi  t)) \cos (\pi  t)<0, 
    \]
    we see that the function \(\omega\) is concave for \(t \in [1/4,1/2]\) and since \(\omega(1/4)=-\frac{\pi}{\sqrt{2}}+\frac{\pi }{2}+1 \geq 0\) and \(\omega(1/2)= 0\) we deduce that \(\omega(t) \geq 0.\) Noticing that \(\psi(1/2)=4-\pi >0\) we conclude the proof of the lemma.
  \end{proof}

\section{Proof of the theorem \ref{tm:reverse-Riesz-type-inequalities}}

\begin{remark}
  If we write the previous inequality in the form
  \[
    \left\lVert f \right\rVert  _{L^p(\mathbb{T})}\leq 2 \cos \frac{\pi}{p}\left\lVert \left( \frac{\left| P_+f \right| ^{s}+ \left| P_-f \right| ^s}{2} \right) ^{1/s} \right\rVert _{L^p(\mathbb{T})}
  \]
  it is easy to see that the RHS is decreasing with respect to \(s,\) thus it is enough to prove it for \(s=\frac{p}{p-1}.\)
\end{remark}
In the paper \cite[Section 4]{melentijevic-reverse-2025} it is proved that the main result follows from the inequality 
\[
  \frac{(1+r)^p}{2^p \cos^p \frac{\pi}{2p}} - \left( \frac{1+r^s}{2} \right) ^{p/s}- r^{p/2} \tan \frac{\pi}{2p} \leq  0,
\]
for \(r \in [0,1],p \in [2,4],s=\frac{p}{p-1}.\) Using the same idea as in the first part of paper we can derive the previous inequality from inequalities:
\begin{equation}\label{the-first-inequality-for-reverse-riesz-inequalities}
  F_1(r)=\frac{1}{2^p\cdot \cos^p \frac{\pi}{2p}} - \left( \frac{1+r^s}{2} \right)^{p-2} \frac{1}{(1+r)^{p-1}}\cdot \frac{1-r^s}{2(1-r)} \leq  0, \quad 0 \leq   r \leq \frac{1}{2}, 
\end{equation}
\begin{equation}\label{the-second-inequality-for-reverse-riesz-inequalities}
  F_2(r)=\frac{1}{2^p \cdot \cos^p \frac{\pi}{2p}}-\tan\frac{\pi}{2p}\cdot r^{\frac{p}{2}-1}\frac{1}{(1+r)^{p-1}}\cdot \frac{1-r^s}{2(1-r^{s-1})}\leq  0, \quad  \frac{1}{2} \leq  r\leq  1.
\end{equation}

\subsection{Proof of the first inequality}
Since 
  \[
    F_1'(r)=\frac{\left(r^s+1\right)^{\frac{p}{s}-2} \left(p \left( 1-r \right) \left( r^s-1 \right) \left( r^s-r \right)   + 2s r\left( r^{s-1}-r^{s+1} \right) +2r(r^{2s}-1)\right)}{2^\frac{p}{s}(r-1)^2 r(r+1)^p}
  \]
we can conclude that \(F_1 \) is increasing from the Lemma \ref{lm:monotonicity-functions-F1-and-F2}.
In light of monotonicity of \(F_1 \) it is enough to prove that \(F_1 \left( \frac{1}{2} \right) \leq 0,\) i.e., 
  \[
    \frac{1}{2^p \cos ^p \frac{\pi}{2p}}\leq \frac{1}{6^{p-1}}\cdot \frac{2^s-1}{(2^s+1)^{(2-p)}}.
  \]
Logarithming the previous inequality and using the constraint \(p=\frac{s}{s-1}\) on can easily transform it into
\begin{equation*}
  \varphi_1(s)=-(s-1) \log \left(2^s-1\right)+(s-2) \log \left(2^s+1\right)-s \log \left(2 \sin \left(\frac{\pi }{2 s}\right)\right)+\log (6)\leq  0,
\end{equation*}
for \(s \in [4/3,2].\) The main idea in the rest of the proof is to prove that \(\varphi_1 \) is convex and nonpositive at the endpoints of domain. From the second derivative of \(\varphi_1 \) given in the formula (\ref{eq:the-second-derivative-of-varphi-1}) one can see that to establish convexity, it is enough to prove
\begin{gather*}
  \frac{\pi ^2 \left(2^s+1\right) \csc ^2\left(\frac{\pi }{2 s}\right)}{4 s^2} +\\ \frac{2^s \left(2^s+1\right) s \log (2) \left(-4^{s+1}+2^{s+1} \log (2)+s \left(2^{2 s+1} \log (2)+\log (4)\right)-(4^s+1) \log (8)+4\right)}{\left(4^s-1\right)^2} \geq 0.
\end{gather*}
Lets denote \(\omega_1(s)=  \frac{\pi^2}{4 s^2} \left(2^s+1\right) \csc ^2\left(\frac{\pi }{2 s}\right)\) and the rest of the expression with \(w_2(s).\)
It can be shown that \(\omega_1 \) logarithmic convex, thus, convex. Namely, one can easy see
\begin{equation}\label{eq:logarithmic-convexity-of-expression-for-reverse-riezs-inequalities}
  \frac{d^2}{ds ^2} \log \omega_1 (s)  =  \frac{\frac{2 s^2 \left(2^s s^2 \log ^2(2)+2 \left(2^s+1\right)^2\right)}{\left(2^s+1\right)^2}+\pi  \left[\pi -2 s \sin \left(\frac{\pi }{s}\right)\right]\csc ^2\left(\frac{\pi }{2 s}\right)}{2 s^4}.
\end{equation}
The function \(s \mapsto \frac{2 s^2 \left(2^s s^2 \log ^2(2)+2 \left(2^s+1\right)^2\right)}{\left(2^s+1\right)^2}\) increasing if and only if the function
\[
  t \mapsto \frac{2 \log ^2(t)}{\log ^2(2)} \left(\frac{t \log ^2(t)}{(t+1)^2}+2\right), \quad t \in [2^{4/3},2^{2}]
\]
is increasing. We will prove monotonicity of the previous function writting it as a product of two nonnegative increasing functions. Namely, \(t \mapsto \log ^2 t\) is positive and increasing for \(t \in\left[ 2^{4/3},2^2 \right] \). Also, since
\[
  \frac{d}{dt}\left[ \frac{t \log ^2(t)}{(t+1)^2}+2 \right] =\frac{\log (t) (2 t-t\log (t)+\log (t)+2)}{(t+1)^3} \geq 0
\]
we can conclude monotonicity of the product (the expression \(2 t-t\log (t)+\log (t)+2\) is concave with positive values at the endpoints). Therefore, 
\[
  \frac{2 s^2 \left(2^s s^2 \log ^2(2)+2 \left(2^s+1\right)^2\right)}{\left(2^s+1\right)^2} \geq \frac{64}{81} \left(9+\frac{16 \sqrt[3]{2} \log ^2(2)}{\left(1+2 \sqrt[3]{2}\right)^2}\right) >7.
\]
Lets examine the rest of the numerator in (\ref{eq:logarithmic-convexity-of-expression-for-reverse-riezs-inequalities}). The expression \(s\sin \frac{\pi}{s}\) is increasing with respect to  \(s,\) which means that \(\pi-2s \sin \frac{\pi}{s}\) is decreasing with respect to  \(s.\) When \(\pi - 2s \sin \frac{\pi}{s}\geq 0\) we have that (\ref{eq:logarithmic-convexity-of-expression-for-reverse-riezs-inequalities}) is positive. If \(\pi - 2s \sin \frac{\pi}{s}< 0\), we have that the function \(\csc \frac{\pi}{2s}\left( 2s \sin \frac{\pi}{s}-\pi \right) \) increases, which means that \(\pi  \left(\pi -2 s \sin \left(\frac{\pi }{s}\right)\right) \csc ^2\left(\frac{\pi }{2 s}\right)\) have the least value when \(s=2,\) and its value is \(2 (\pi -4) \pi\geq -6.\)

Now, it is known that the function \(\omega_1 \) is convex, which means that it is either decreasing, decreasing then increasing, or increasing.
The monotonicity of the function coincides with the monotonocity of the function  \(\log \omega_1 (s).\) Taking the first derivative for \(s=\frac{3}{2}\) we have
\[
  \left.\frac{s \left(\frac{2^s s \log (2)}{2^s+1}-2\right)+\pi  \cot \left(\frac{\pi }{2 s}\right)}{s^2}\right|_{s=\frac{3}{2}}=-\frac{4}{3}+\frac{4 \pi }{9 \sqrt{3}}+\frac{\sqrt{2} \log (4)}{2 \sqrt{2}+1}<0
\]
that the function \(\omega_1 \) is decreasing on \(\left[ \frac{4}{3},\frac{3}{2} \right] .\) 

The inequality \(\varphi_1''(s)\geq 0\) on interval \([\frac{4}{3},\frac{3}{2}]\) can be achieved by compasion its values in the endpoints of intervals \(\left[ \frac{4}{3},\frac{7}{5} \right] \) and \(\left[ \frac{7}{5}, \frac{3}{2} \right] .\) Namely, \(\omega_1 \) is decreasing in the interval, and \(\omega_2 \) is increasing (Lemma \ref{lm:monotonicity-of-omega-2}) and \(\omega_1 \left( \frac{7}{5} \right)+\omega_2 \left( \frac{4}{3}\right) \geq 0  \) and \(\omega_1 \left( \frac{3}{2} \right) +\omega_2 \left( \frac{7}{5}\right)\geq 0.  \) In the rest of the interval, we can use that \(\omega_1 \) is convex and \(\omega_2 \) is increasing (Lemma \ref{lm:monotonicity-of-omega-2}), which implies that 
\begin{align*}
  \omega_1 (s)
  & \geq \frac{8\pi^2}{729} \left(s-\frac{3}{2}\right) \underbrace{\left[2 \pi  \left(\sqrt{3}+2 \sqrt{6}\right)+9 \left(-4 \sqrt{2}-2+\sqrt{2} \log (8)\right)\right]}_{\leq 0}+\frac{4}{27}  \left(2 \sqrt{2}+1\right) \pi ^2
  \\ & \geq \frac{4}{729} \pi ^2 \left(18 \sqrt{2}+2 \pi  \sqrt{3}+4 \pi  \sqrt{6}+9+9 \sqrt{2} \log (8)\right)
  \\ & \geq - \frac{3 \left(2 \sqrt{2}+1\right) \log (2) \left(\sqrt{2} \log (256)-56\right)}{49 \sqrt{2}} = -\omega_2 \left( \frac{3}{2} \right) \geq -\omega_2 (s).
\end{align*}
That proves that the function \(\varphi_1\) is convex. As 
\(\varphi_1 \left( \frac{4}{3} \right) <0\) and \(\varphi_1(2)=0\) we have that \(\varphi_1 (s)\leq  0.\)

\begin{lemma}\label{lm:monotonicity-of-omega-2}
  The function \(\omega_2 \) is increasing for \(s \in [4/3,2].\)
\end{lemma}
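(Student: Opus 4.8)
The plan is to reduce the monotonicity of $\omega_2$ to a single one‑variable inequality by the substitution $t=2^s$, and then to dispatch that inequality by the same convexity‑plus‑endpoint technique used repeatedly in this section.

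First I would pass to the variable $t=2^s$, which ranges over $[2^{4/3},4]$ as $s$ runs over $[4/3,2]$; since $s\mapsto 2^s$ is increasing, it suffices to show that $\omega_2$, regarded as a function of $t$, is increasing on $[2^{4/3},4]$. Substituting and using $s\log 2=\log t$ puts it in the compact form
\[
  \omega_2=\frac{t\,(\log t)\,N(t)}{(t-1)^2(t+1)},\qquad N(t)=2(t^2+1)\log t-4(t^2-1)-(3t^2-2t+3)\log 2,
\]
and $N$ is exactly the quantity that appears, after the identical substitution $2^s=t$, as the numerator in \eqref{auxiliary-inequality-after-substitution}; in particular the forward part of the paper has already shown that $-N(t)/(t^2-1)$, the expression \eqref{monotonicity-the-rest-of-the-expression}, is positive and decreasing on $[4,16]$. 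As a preliminary step I would verify that $N(t)<0$ and $N'(t)<0$ on the smaller interval $[2^{4/3},4]$; this is a routine check (concavity/monotonicity of the explicit function $N$ together with evaluation at $t=2^{4/3}$ and $t=4$).

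Next I would differentiate. Since the denominator $(t-1)^2(t+1)$ is positive on the interval, the sign of $\omega_2'$ equals the sign of
\[
  G(t)=\bigl(t^2-1-(2t^2+t+1)\log t\bigr)\,N(t)+t(t^2-1)(\log t)\,N'(t),
\]
so the lemma is equivalent to $G(t)\ge 0$ on $[2^{4/3},4]$. On this interval $N(t)<0$ and $t^2-1-(2t^2+t+1)\log t<0$, so the first summand of $G$ is positive, while $N'(t)<0$ makes the second negative; the content of the lemma is that the first dominates. I would make this precise by rewriting $G\ge 0$ as $\dfrac{N'(t)}{N(t)}\le \dfrac{2t^2+t+1}{t(t^2-1)}-\dfrac{1}{t\log t}$ and then proving this last bound. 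The most likely route is to show that $G$ (equivalently, the difference of the two sides) is concave on $[2^{4/3},4]$ by an explicit sign computation of its second derivative — if necessary after splitting $[2^{4/3},4]$ into two or three subintervals and estimating some of the trigonometric‑free blocks by their secant or tangent lines, exactly as elsewhere in Section \ref{main-section} — and then to check the two endpoint values, where the resulting closed‑form expressions are positive.

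The hard part will be the inequality $G(t)\ge 0$ itself: throughout $[2^{4/3},4]$ the two summands $\bigl(t^2-1-(2t^2+t+1)\log t\bigr)N(t)$ and $t(t^2-1)(\log t)N'(t)$ have comparable magnitude (indeed $\omega_2$ itself varies only within a narrow band, roughly between $-5.6$ and $-5.3$), so no crude estimate suffices and one must capture the exact cancellation. The substitution, the identification of $N$ with the quantity from \eqref{auxiliary-inequality-after-substitution}, and the reduction to $G\ge 0$ are mechanical; essentially all the work is in the second‑order analysis of $G$ on the subintervals.
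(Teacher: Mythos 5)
Your reduction is correct as far as it goes: with $t=2^s$ one does get $\omega_2 = \dfrac{t\log t\,N(t)}{(t-1)^2(t+1)}$ with your $N$, and the identification of $N$ with the numerator from \eqref{auxiliary-inequality-after-substitution} is right, so the lemma is indeed equivalent to the nonnegativity of
\[
  G(t)=\bigl(t^2-1-(2t^2+t+1)\log t\bigr)N(t)+t(t^2-1)(\log t)\,N'(t).
\]
But that is where the proof stops being a proof. Everything you say after that point is a plan, not an argument: you yourself note that the two summands of $G$ nearly cancel (your numeric sampling shows ratios like $431$ vs.\ $-402$), and then propose to show $G\ge0$ by a concavity-plus-endpoints device, possibly after splitting the interval in two or three pieces, without verifying any of it. That is precisely the kind of delicate cancellation where a ``compute the second derivative, hope for a fixed sign'' scheme often does not close, and in any case nothing is established. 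So the key step — the one you correctly flag as ``the hard part'' — is missing.

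The paper avoids the near-cancellation in $G$ entirely by keeping the product structure rather than expanding it. After the substitution it writes $-\omega_2=f(t)g(t)$ with $f(t)=\dfrac{t\log t}{t^2-1}$ (whose negativity of $f'$ was already shown in Section~2) and $g(t)=\dfrac{-N(t)}{t-1}$, and proves in Lemma~\ref{lm:function-g-monotonicity-for-reverse-riesz-inequalities} the three facts that actually matter: $g$ is increasing, $0\le g'\le1$, and $g(5/2)\ge25/2$. Feeding these into the product rule gives the \emph{one-sided} bound $(fg)'=f'g+fg'\le\frac{25}{2}f'+f$, whose nonpositivity then reduces to a clean polynomial-in-$\log t$ inequality handled by a fourth-derivative sign check. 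So a ``crude'' estimate \emph{does} suffice — but at the level of $f'g+fg'$, where replacing $g$ by a constant lower bound and $g'$ by $1$ respects the signs, not at the level of the two summands of $G$, where no such decoupling is available. Your proposal needs either to carry out the concavity analysis of $G$ explicitly (which the hedged wording suggests you have not done and which may well require further ideas), or to switch to the paper's product-rule bound, which sidesteps the cancellation problem you correctly identified.
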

\begin{proof}
  Using the substitution \(2^{s} = t \) and multiplying with \(-1,\) it is easy to see that it is enough to prove
  \[
     \frac{t \log t }{t^2-1}\cdot\frac{\left(t^2 (4+\log (8))-2 t^2 \log (t)+\log \left(\frac{8}{t^2}\right)-t \log (4)-4\right)}{(t-1)}
  \]
  is decreasing for \(t \in \left[ 5/2,4 \right] .\) Let \(f(t)= \frac{t \log t}{t^2-1},\) and denote the rest of the previous expression by \(g(t).\) Using lemma (\ref{lm:function-g-monotonicity-for-reverse-riesz-inequalities}) we get
  \begin{align*}
    \frac{d}{dt}\left[ f(t) \cdot g(t) \right] = &f'(t)g(t) + f(t) g'(t) \leq f'(t) g\left( \frac{5}{2} \right) + f(t) \leq \frac{25}{2}f'(t)+f(t) =
    \\ = &
    \frac{25 \left(t^2-1\right)+\left(2 t^3-25 t^2-2 t-25\right) \log (t)}{2 \left(t^2-1\right)^2}.
  \end{align*}
  Therefore, if \(\varphi(t)=25 \left(t^2-1\right)+\left(2 t^3-25 t^2-2 t-25\right) \log (t) \leq 0\) the statement of the lemma holds. Given that \(\varphi ^{iv}(t) =\frac{2 \left(6 t^3+25 t^2-2 t+75\right)}{t^4}\geq 0\) and \(\varphi'''\left( \frac{5}{2} \right) = 12 \log \left(\frac{5}{2}\right)-\frac{22}{25} \geq 0\) we conclude that \(\varphi'''(t)\) is a positive function. That means that \(\varphi''(t)\) is increasing. Therefore, \(\varphi'(t)\) first decreases, and then increases which means that \(\varphi '\) attains its maximum values at the endpoints of the interval. As
  \[
    \varphi'\left( \frac{5}{2} \right) = 63-\frac{179}{2} \log \left(\frac{5}{2}\right) \leq  0 \quad \text{i} \quad \varphi'(4)=\frac{495}{4}-106 \log (4) \leq 0
  \]
  we infer that \(\varphi\) is decreasing. Seeing that \(\varphi \left( \frac{5}{2} \right) =\frac{525}{4}-155 \log \left(\frac{5}{2}\right)<0 \) we obtain that \(\varphi(t) \leq 0\) for \(t \in \left[ \frac{5}{2},4 \right] .\)
\end{proof}
\begin{lemma}\label{lm:function-g-monotonicity-for-reverse-riesz-inequalities}
  The function
  \[
    g(t)=\frac{\left(t^2 (4+\log (8))-2 t^2 \log (t)+\log \left(\frac{8}{t^2}\right)-t \log (4)-4\right)}{(t-1)}  
  \]
  is increasing in \(t \in \left[ \frac{5}{2},4 \right] \) and \(0\leq  g'(t) \leq  1.\) Also, \(g\left( \frac{5}{2} \right) \geq 25/2.\)
\end{lemma}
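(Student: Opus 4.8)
The plan is to write $g(t)=N(t)/(t-1)$ with $N(t)=(4+\log 8)\,t^{2}-2t^{2}\log t+\log(8/t^{2})-t\log 4-4$, so that
\[
  g'(t)=\frac{N'(t)(t-1)-N(t)}{(t-1)^{2}}=:\frac{P(t)}{(t-1)^{2}} .
\]
The lemma then amounts to three statements on $[5/2,4]$: \emph{(i)} $P\ge 0$ (this is ``$g$ increasing'', equivalently $g'\ge 0$); \emph{(ii)} $P(t)\le(t-1)^{2}$ (this is $g'\le 1$); \emph{(iii)} $g(5/2)\ge 25/2$. I would treat them in that order, reducing everything to monotonicity and convexity of a few explicit functions together with a handful of numerical inequalities in $\log 2$, $\log 3$, $\log 5$.

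For \emph{(i)}, a direct differentiation gives $P'(t)=N''(t)(t-1)$ with $N''(t)=2+2\log 8-4\log t+2/t^{2}$, hence $N'''(t)=-4/t-4/t^{3}<0$; so $N''$ is decreasing on $[5/2,4]$ and stays positive, since $N''(4)=\tfrac{17}{8}-2\log 2>0$. Thus $P$ is increasing on $[5/2,4]$, and it suffices to check $P(5/2)\ge 0$; evaluating (with $\log(5/2)=\log 5-\log 2$) yields the closed form $P(5/2)=\tfrac{3}{10}+\tfrac{13}{4}\log 2-\tfrac12\log 5$, which is positive because already $\tfrac{13}{4}\log 2-\tfrac12\log 5=\tfrac14\log(2^{13}/5^{2})>0$. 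This gives $g'\ge 0$, i.e. $g$ is increasing.

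For \emph{(ii)}, set $Q(t)=P(t)-(t-1)^{2}$, so the goal is $Q\le 0$ on $[5/2,4]$. One computes
\[
  Q''(t)=N'''(t)(t-1)+N''(t)-2=-4+\tfrac{4}{t}-\tfrac{2}{t^{2}}+\tfrac{4}{t^{3}}+6\log 2-4\log t,
\]
and $Q''\le 0$ is equivalent to $6\log 2-4\log t\le 4-4/t+2/t^{2}-4/t^{3}$; the left side decreases and the right side increases in $t$ (its derivative is $4t^{-4}(t^{2}-t+3)>0$), so this reduces to the single instance at $t=5/2$, namely $10\log 2-4\log 5\le\tfrac{308}{125}$, which holds with wide margin. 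Hence $Q$ is concave on $[5/2,4]$, and therefore lies below its tangent line at $t=3$: $Q(t)\le Q(3)+Q'(3)(t-3)$ for $t\in[5/2,4]$. A short computation gives $Q(3)=-\tfrac43+4\log(4/3)<0$ (since $(4/3)^{3}<e$) and $Q'(3)=2\bigl(\log(64/81)+\tfrac29\bigr)<0$ (since $81/64>e^{2/9}$); as this tangent is an affine function of $t$ that is negative at both $t=5/2$ and $t=4$ — the checks $Q(3)-\tfrac12 Q'(3)<0$ and $Q(3)+Q'(3)<0$ being immediate from $|Q(3)|\approx 0.18$, $|Q'(3)|\approx 0.03$ — it is negative on all of $[5/2,4]$. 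Thus $Q\le 0$, i.e. $g'\le 1$.

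Finally, for \emph{(iii)}, $g(5/2)=\tfrac23 N(5/2)=14+\tfrac{125}{6}\log 2-\tfrac{29}{3}\log 5$, and $g(5/2)\ge\tfrac{25}{2}$ is equivalent to $58\log 5-125\log 2\le 9$, which holds since the left side is $\approx 6.7$. The step I expect to be the real obstacle is \emph{(ii)}: the inequality $g'\le 1$ has only a thin margin — $g'$ attains its maximum $\approx 0.955$ near $t=3$, so $Q$ clears $0$ only by about $0.18$ — and the estimate must be organized with care. The two ingredients that make the crude bound work are the concavity of $Q$ (which confines the worst point to a neighbourhood of the critical point near $t=3$) and the choice of the tangent line precisely at $t=3$, where $Q$ is almost maximal; with any other anchor point the slack $|t-3|\le\tfrac12$ would not be affordable. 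Everything else reduces to bookkeeping with rational bounds for $\log 2,\log 3,\log 5$.
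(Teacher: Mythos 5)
Your proposal is correct and follows the same overall architecture as the paper's proof: reduce the three claims to sign conditions on the numerator of $g'$, establish the needed convexity/concavity of that numerator, and for the delicate bound $g'\le 1$ bound the relevant expression by its tangent at $t=3$ (the near-critical point of $g'$). Where you differ is in the bookkeeping, and your version is leaner. You write $g'=P/(t-1)^2$ with $P=N'\cdot(t-1)-N$; the quotient-rule identity $P'(t)=N''(t)(t-1)$ then lets you deduce monotonicity of $P$ from positivity of $N''$ — a second-order fact about $N$ — whereas the paper works with $\varphi=tP$ and has to push through $\varphi^{(iv)}<0$, $\varphi'''(5/2)<0\Rightarrow\varphi'''\le 0$, $\varphi''$ decreasing, $\varphi'$ minimal at endpoints, before getting to $\varphi\ge 0$. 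Likewise for $g'\le 1$: you get concavity of $Q=P-(t-1)^2$ by noting that $Q''\le 0$ rearranges into a comparison of a decreasing and an increasing function of $t$, so a single endpoint check at $t=5/2$ suffices, while the paper runs through $\psi^{(iv)}$, $\psi'''$, $\psi''$ for $\psi=tQ$. Both routes arrive at the same tangent-line estimate at $t=3$. The only place you gesture rather than compute is the final pair of sign checks $Q(3)-\tfrac12 Q'(3)<0$ and $Q(3)+Q'(3)<0$, which you justify by the rough numerics $|Q(3)|\approx 0.18$, $|Q'(3)|\approx 0.03$; to make this rigorous you should replace these with explicit rational bounds (e.g.\ $0.693<\log 2<0.694$, $1.098<\log 3<1.099$), but as you observe the margin is wide and this is routine. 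Everything else — the closed form $P(5/2)=\tfrac{3}{10}+\tfrac{13}{4}\log 2-\tfrac12\log 5>0$, the identities $Q(3)=-\tfrac43+4\log(4/3)$ and $Q'(3)=2\bigl(\log(64/81)+\tfrac29\bigr)$, and $g(5/2)=14+\tfrac{125}{6}\log 2-\tfrac{29}{3}\log 5\ge\tfrac{25}{2}$ — checks out.
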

\begin{proof}
  Because
  \[
    g'(t)=\frac{t \left(\log \left(\frac{t^2}{2}\right)-2 (t-2) t \log (t)\right)+t (2 (t-3) t+(t-2) t \log (8)+2)+2}{(t-1)^2 t}
  \]
  to prove monotonicity we will prove \(\varphi(t)=t \left(\log \left(\frac{t^2}{2}\right)-2 (t-2) t \log (t)\right)+t (2 (t-3) t+(t-2) t \log (8)+2)+2 \geq  0.\) Seeing that \(\varphi^{(iv)}(t)=\frac{-12 t^2-8 t+4}{t^3}<0,\) and  \(\varphi'''\left( \frac{5}{2} \right) = -\frac{178}{25}+6 \log (8)-12 \log \left(\frac{5}{2}\right)<0\) we deduce that  \(\varphi'''(t)\leq 0.\) Now, we can deduce that \(\varphi''(t)\) is decreasing, which means that \(\varphi'(t)\) increases and then decreases. That means, that \(\varphi'(t)\) attains a minimal value at the ends of interval. Seeing that
  \[
    \varphi'\left( \frac{5}{2} \right) =9+\frac{163 \log (2)}{4}-\frac{31 \log (5)}{2} >0  \quad \text{i} \quad \varphi'(4)=36 - 29 \log (2) >0
  \]
  we have \(\varphi'(t)\geq 0.\) Thus, \(\varphi(t)\) is increasing function. Because \(\varphi \left( \frac{5}{2} \right) =\frac{1}{8} (6+65 \log (2)-10 \log (5)) \geq 0,\) we establish that  \(g\) is increasing. As a result we have
  \[
    \min_{\frac{5}{2}\leq t\leq 4}g(t) \geq g\left( \frac{5}{2} \right) =14+\log \left(\frac{1048576\ 2^{5/6}}{1953125\ 5^{2/3}}\right) \geq \frac{25}{2}.
  \]
  The inequality \(g'(t)\leq 1\) we will show it in an equivalent form
  \[
    t \left(\log \left(\frac{t^2}{2}\right)-2 (t-2) t \log (t)\right)+t (2 (t-3) t+(t-2) t \log (8)+2)+2\leq (t-1)^2 t,
  \]
  i.e., \( \psi(t)=t^3+t^3 \log (8)-4 t^2-2 t^2 \log (8)-2 (t-2) t^2 \log (t)+t \log \left(\frac{t^2}{2}\right)+t+2\leq  0.\) Since, \(\psi^{iv}(t)=\frac{-12 t^2-8 t+4}{t^3}\leq 0,\) and \(\psi ^{(3)}\left( \frac{5}{2} \right) = -\frac{328}{25}+30 \log (2)-12 \log (5) \leq 0 \) function \(\psi ^{(3)}\) is non-positive, which means that \(\psi ''\) is decreasing. Also, \(\psi ^{(2)}\left( \frac{5}{2} \right) = -\frac{26}{5}+55 \log (2)-22 \log (5)<0,\) which means that the second derivative is negative function, which means that \(\psi\) is concave. Therefore, we can bound \(\psi\) from above by the tangent line in \(t=3,\) i.e.,
  \[
    \psi(t)\leq (t-3) \left[44 \log (2)-28 \log (3)\right]+4 \left(\log \left(\frac{64}{27}\right)-1\right).
  \]
  As the slope of the tangent line is negative, it is sufficient to prove its negativity in \(t=\frac{5}{2},\) i.e., that \(\log (36)-4\leq 0\) which is evident.
\end{proof}
\subsection{Proof of the second inequality}
Since
\[
  F_2'(r)=-\frac{r^{\frac{p}{2}-1} \tan \left(\frac{\pi }{2 p}\right) \left(p \left( 1-r \right) \left( r^s-1 \right) \left( r^s-r \right)   + 2s r\left( r^{s-1}-r^{s+1} \right) +2r(r^{2s}-1)\right)}{4 \left(r-r^s\right)^2(1+r)^p}
\]
\(F_2 \) is  decreasing by Lemma \ref{lm:monotonicity-functions-F1-and-F2}.
Therefore, to prove  the second inequality it is sufficient to prove that \(F_2 \left( \frac{1}{2} \right) \leq 0,\) i.e.,  
    \[
      \frac{1}{2^p \cos ^p \frac{\pi}{2p}}-\tan \frac{\pi}{2p}\cdot \frac{2^{s}-1}{2^{s}-2}\cdot \frac{2^{\frac{p-2}{2}}}{3^{p-1}}\leq  0.
    \]
Taking the logarithm of the previous inequality, we obtain
    \[
      -p \log \left( 2 \cos \frac{\pi}{2p} \right) - \log \left( \tan \frac{\pi}{2p}   \right) - \log \frac{2^s-1}{2^s-2} +\log \sqrt{2} + (1-p) \log \frac{\sqrt{2}}{3} \leq  0.
    \]
Taking into account \( p = \frac{s}{s-1}\) it takes the form
    \[
      \varphi_2(s)=  s \log 2+\log \frac{9}{4}-2 s \log \left(\sin \left(\frac{\pi }{2 s}\right)\right)-2 (s-1) \log \left(\cot \left(\frac{\pi }{2 s}\right)\right)-2 (s-1) \log \left(\frac{2^s-1}{2^s-2}\right)  \leq  0
    \] 
for \(s \in [4/3 ,2].\)
Using Lemma \ref{lm:reverse-riesz-inequalities-concativity-of-the-first-part-of-function-in-second-inequation-} we obtain the following inequality
\[
  -2 s \log \left(2 \sin \left(\frac{\pi }{2 s}\right)\right)-2 (s-1) \log \left(\cot \left(\frac{\pi }{2 s}\right)\right) \leq  -\log (2) (s-2)-\log 4.
\]
Therefore, using the previous inequality we find 
\[
  \varphi_2(s)\leq  \log \left(\frac{9}{4}\right)-2 (s-1) \log \left(\frac{2^s-1}{2^s-2}\right), \quad s \in \left[ \frac{8}{5},2 \right] .
\]
The RHS is convex with respect to \(s\) by Lemma \ref{lm:reverse-riesz-inequalities-convexity-of-the-second-part-of-inequality}. Since the expression on the right-hand side of the previous inequality takes values \(\log \left(\frac{9}{4}\right)-\frac{6}{5} \log \left(\frac{1-2^{8/5}}{2-2^{8/5}}\right) < 0\) and \(0\) at the endpoints of the interval, respectively, we conclude that the inequality holds in this case.
On the interval \(\left[ \frac{4}{3},\frac{8}{5} \right],\) by concavity, holds
\[
  -2 s \log \left(2 \sin \left(\frac{\pi }{2 s}\right)\right)-2 (s-1) \log \left(\cot \left(\frac{\pi }{2 s}\right)\right) \leq -\frac{(2 \pi ) \left(s-\frac{3}{2}\right)}{9 \sqrt{3}}-\log (3),
\]
which implies to
\[
  \varphi_2(s) \leq \frac{\pi  (3-2 s)}{9 \sqrt{3}}+s \log (2)-2 (s-1) \log \left(\frac{2^s-1}{2^s-2}\right)-\log \left(\frac{4}{3}\right), \quad s \in \left[ \frac{4}{3},\frac{5}{8} \right] .
\]
Again, the RHS  is a convex function attains values \(\frac{\pi }{27 \sqrt{3}}+\log (3)+\frac{2}{3}\log \frac{\sqrt[3]{2}-1}{2\sqrt[3]{2}-1}<0\) and \(\frac{1}{5} \left(\log (3888)+6 \log \left(2^{3/5}-1\right)-6 \log \left(2\ 2^{3/5}-1\right)\right)-\frac{\pi }{45 \sqrt{3}} <0\) at the endpoints of the interval \(\left[ \frac{4}{3},\frac{8}{5} \right] \) from which the second inequality follows.
\begin{lemma}\label{lm:reverse-riesz-inequalities-concativity-of-the-first-part-of-function-in-second-inequation-}
  The function
    \[
      \varphi(s)=-2 s \log \left(2 \sin \left(\frac{\pi }{2 s}\right)\right)-2 (s-1) \log \left(\cot \left(\frac{\pi }{2 s}\right)\right)
    \]
  is concave on the interval \(s \in \left[ \frac{4}{3},2 \right] \)
\end{lemma}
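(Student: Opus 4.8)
The plan is to reduce the concavity of $\varphi$ to an elementary trigonometric inequality, in the same spirit as Lemma~\ref{auxiliary-lemma-1-second-inequality} but on the adjacent parameter range $s\in[4/3,2]$, where only negativity (and not monotonicity) of the relevant expression is needed. Since $-2s\log\bigl(2\sin\frac{\pi}{2s}\bigr)$ and $-2s\log\bigl(\sin\frac{\pi}{2s}\bigr)$ differ by the linear term $-2s\log 2$, the function $\varphi$ is concave on $[4/3,2]$ precisely when $\widetilde\varphi(s):=-2s\log\bigl(\sin\frac{\pi}{2s}\bigr)-2(s-1)\log\bigl(\cot\frac{\pi}{2s}\bigr)$ is. Its second derivative was already computed above, namely
\[
\widetilde\varphi''(s)=\frac{\pi\csc\frac{\pi}{s}\left(s\left(\pi\csc\frac{\pi}{s}-4\right)-\pi(s-2)\cot\frac{\pi}{s}\right)}{s^{4}}.
\]
For $s\in[4/3,2]$ one has $\frac{\pi}{s}\in\bigl[\frac{\pi}{2},\frac{3\pi}{4}\bigr]\subset(0,\pi)$, so $\csc\frac{\pi}{s}>0$; cancelling the positive factor $\csc\frac{\pi}{s}/s^{4}$ and putting $t=1/s\in\bigl[\frac12,\frac34\bigr]$ (the same normalization as in Lemma~\ref{auxiliary-lemma-1-second-inequality}), the inequality $\widetilde\varphi''\le 0$ is equivalent to
\[
\psi(t):=-\pi(2t-1)\cot(\pi t)-\pi\csc(\pi t)+4\ \ge\ 0,\qquad t\in\bigl[\tfrac12,\tfrac34\bigr].
\]

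First I would differentiate: a short computation gives $\psi'(t)=-\pi\csc^{2}(\pi t)\,\omega(t)$ with $\omega(t)=-2\pi t+\sin(2\pi t)-\pi\cos(\pi t)+\pi$, and then $\omega''(t)=\pi^{2}\bigl(\pi-8\sin(\pi t)\bigr)\cos(\pi t)$ — both identities already appear in the proof of Lemma~\ref{auxiliary-lemma-1-second-inequality}. On $\bigl[\frac12,\frac34\bigr]$ we have $\cos(\pi t)\le 0$ and $\sin(\pi t)\ge\sin\frac{3\pi}{4}=\frac{\sqrt2}{2}$, hence $8\sin(\pi t)\ge 4\sqrt2>\pi$ and so $\omega''(t)\ge 0$: thus $\omega$ is \emph{convex} on this interval. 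Since $\omega(\tfrac12)=0$ and $\omega(\tfrac34)=\tfrac{\pi}{2}\bigl(\sqrt2-1\bigr)-1<0$, a convex function attains its maximum over $\bigl[\frac12,\frac34\bigr]$ at an endpoint, so $\omega(t)\le 0$ throughout. Therefore $\psi'(t)=-\pi\csc^{2}(\pi t)\,\omega(t)\ge 0$, i.e.\ $\psi$ is nondecreasing on $\bigl[\frac12,\frac34\bigr]$, whence $\psi(t)\ge\psi(\tfrac12)=4-\pi>0$. This gives $\widetilde\varphi''(s)<0$ on $[4/3,2]$, so $\widetilde\varphi$, and therefore $\varphi$, is (strictly) concave there, as claimed.

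The only place where this argument genuinely departs from Lemma~\ref{auxiliary-lemma-1-second-inequality} is the sign of $\cos(\pi t)$: on the range $t\in\bigl[\frac14,\frac12\bigr]$ treated there $\cos(\pi t)\ge 0$, so $\omega$ is concave and $\psi$ decreasing, whereas on $t\in\bigl[\frac12,\frac34\bigr]$ both flip. I expect the main — and rather modest — obstacle to be exactly keeping these sign reversals straight and confirming the two numerical facts $\omega(\tfrac34)<0$ and $\psi(\tfrac12)=4-\pi>0$; the derivative identities for $\psi'$ and $\omega''$ are then routine, being the same ones used earlier.
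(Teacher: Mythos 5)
Your proof is correct. You reduce concavity of \(\varphi\) to \(\widetilde\varphi''\le 0\) (linear terms don't affect concavity), reach the same second-derivative formula used in the paper, and substitute \(t=1/s\) — all as the paper does. The difference is in how the resulting trigonometric inequality on \([1/2,3/4]\) is verified. The paper multiplies by the positive quantity \(\sin(\pi/s)/s\) to get the expression
\[
E(t)=-4\sin(\pi t)+\pi(2t-1)\cos(\pi t)+\pi,
\]
computes \(E''(t)=-\pi^{3}(2t-1)\cos(\pi t)\ge 0\) on \([1/2,3/4]\) and checks \(E\le 0\) at both endpoints, so one level of differentiation suffices. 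You instead divide by \(s\) and negate to obtain \(\psi(t)=-E(t)/\sin(\pi t)\), then show \(\psi\) is nondecreasing by analyzing \(\psi'=-\pi\csc^{2}(\pi t)\,\omega(t)\) together with the convexity of \(\omega\); this needs two levels of differentiation but has the virtue of directly recycling the identities from the proof of Lemma~\ref{auxiliary-lemma-1-second-inequality} with the sign flips you identify (\(\cos(\pi t)\) changes sign, so \(\omega\) goes from concave to convex and \(\psi\) from decreasing to increasing). Both arguments are sound; the paper's is marginally shorter, yours makes the parallel with the adjacent lemma more explicit.
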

\begin{proof}
  From 
  \[
    \varphi''(t)=\frac{\pi  \csc \left(\frac{\pi }{s}\right) \left[s \left(\pi  \csc \left(\frac{\pi }{s}\right)-4\right)-\pi  (s-2) \cot \left(\frac{\pi }{s}\right)\right]}{s^4}
  \]
  we observe that the sign of \(\varphi''\) is determined by the sign of \(s \left(\pi  \csc \left(\frac{\pi }{s}\right)-4\right)-\pi  (s-2) \cot \left(\frac{\pi }{s}\right).\) By multiplying the expression by \(\frac{\sin \frac{\pi}{s}}{s}\) and using the substitution \(t=\frac{1}{s}\) we get that concavity of \(\varphi\) is equivalent with 
  \[
    -4 \sin (\pi  t)+\pi  (2 t-1) \cos (\pi  t)+\pi  \leq 0 \quad t \in \left[ \frac{1}{2},\frac{3}{4} \right] .
  \]
  Since \(\frac{d^2}{dt^2}\left[ -4 \sin (\pi  t)+\pi  (2 t-1) \cos (\pi  t)+\pi \right] =-\pi ^3 (2 t-1) \cos (\pi  t)\geq 0,\) we conclude that the expression \(-4 \sin (\pi  t)+\pi  (2 t-1) \cos (\pi  t)+\pi\) is convex with respect to \(t.\) Its values at the end of the interval are \(\pi -4 \leq 0\) and \(-2 \sqrt{2}-\frac{\pi }{2 \sqrt{2}}+\pi\leq 0,\) which means that the statement of the lemma holds. 
\end{proof}
\begin{lemma} \label{lm:reverse-riesz-inequalities-convexity-of-the-second-part-of-inequality}
  The function 
  \[
    \varphi(s)=-2 (s-1) \log \left(\frac{2^s-1}{2^s-2}\right)
  \]
  is convex for \(s \in \left[ \frac{4}{3},2 \right] .\)
\end{lemma}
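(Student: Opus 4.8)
The plan is to reuse the computation of the second derivative of $s\mapsto-2(s-1)\log\frac{2^s-1}{2^s-2}$ already carried out in the proof of Theorem \ref{tm:Ryesz-type-inequalities} (the unnumbered display right before inequality (\ref{izraz1})), and to reprove the resulting one–variable inequality on the range relevant here. For $s\in[4/3,2]$ we have $2^s\in[2^{4/3},4]$, so $2^s-2>0$, $2^s-1>0$ and $2^{s+1}\log 2>0$; hence $\varphi''(s)$ has the sign opposite to the large factor in parentheses in that formula, and $\varphi''\ge 0$ is equivalent, after the substitution $t=2^s$, to
\[
  h(t):=-t^2(2+\log 2)+(t^2-2)\log t+6t-4+\log 4\le 0,\qquad t\in[2^{4/3},4].
\]
This is exactly inequality (\ref{izraz1}), but now over $[2^{4/3},4]$ rather than over $[4,6]$.

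The obstacle is that the argument used for $t\in[4,6]$ — that $t\ge 4$ forces $h''(t)=\tfrac{2}{t^2}+2\log\tfrac t2-1>0$, so $h$ is convex and hence attains its maximum at an endpoint — is no longer available, because on $[2^{4/3},4]$ the quantity $h''$ changes sign (indeed $h''(2^{4/3})<0$), so $h$ is not convex there. To get around this I would differentiate once more: $(h'')'(t)=\tfrac{2(t^2-2)}{t^3}>0$ on $[2^{4/3},4]$ since $2^{4/3}>\sqrt 2$, so $h''$ is increasing, i.e. $h'$ is convex on $[2^{4/3},4]$ (equivalently, $h$ is concave on an initial subinterval and convex thereafter).

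To conclude, I would compute $h'(t)=-2t(2+\log 2)+2t\log t+t-\tfrac{2}{t}+6$ and check the two endpoint values $h'(2^{4/3})<0$ and $h'(4)=8\log 2-\tfrac{13}{2}<0$. Since $h'$ is convex on $[2^{4/3},4]$ and negative at both endpoints, it lies below the chord joining those (negative) values, so $h'\le 0$ on the whole interval; hence $h$ is non–increasing on $[2^{4/3},4]$ and
\[
  h(t)\le h(2^{4/3})=6\cdot 2^{4/3}-2^{11/3}-4+\tfrac{2}{3}(2^{5/3}-1)\log 2<0,
\]
which, together with the value $h(4)=14\log 2-12<0$ at the right end, gives $h\le 0$ on $[2^{4/3},4]$. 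Therefore $\varphi''\ge 0$ and $\varphi$ is convex on $[4/3,2]$.

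The only delicate point is the one just described: unlike on $[4,6]$, the function $h$ is genuinely not convex on $[2^{4/3},4]$, so one cannot copy the earlier endpoint argument for $h$; the remedy is to pass to the honestly convex function $h'$ and to use that the whole interval lies to the right of $\sqrt 2$, which forces $h''$ to be increasing. Beyond that, everything reduces to evaluating a few elementary expressions at $t=2^{4/3}$ and $t=4$.
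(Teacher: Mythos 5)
Your proof is correct and follows essentially the same route as the paper's own argument: both establish $h'''(t)=\tfrac{2(t^2-2)}{t^3}>0$ (valid since $t>\sqrt 2$), deduce that $h'$ is convex, check that $h'$ is negative at the two endpoints so $h'\le 0$ throughout, and then conclude that $h$ is non-increasing and bounded by its left-endpoint value. The only cosmetic difference is that the paper extends the domain to $[2,4]$ so that the left endpoint gives the clean values $\psi(2)=0$ and $\psi'(2)=-1$, whereas you work directly on $[2^{4/3},4]$ and pay for it with the slightly messier evaluations $h(2^{4/3})<0$ and $h'(2^{4/3})<0$.
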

\begin{proof}
  Since
  \[
    \varphi''(s)=-\frac{2^{s+1} \log (2) \left(3\cdot 2^{s+1}-2^{2 s+1}-4^s \log (2)+4^s s \log (2)-s \log (4)-4+\log (4)\right)}{\left(2^s-2\right)^2 \left(2^s-1\right)^2}
  \]
  for concavity of \(\varphi\) it is enough to prove \(3\cdot 2^{s+1}-2^{2 s+1}-4^s \log (2)+4^s s \log (2)-s \log (4)-4+\log (4)\leq  0.\) By substitution \(2^s=t\) we get the equivalent inequality
  \[
    \psi(t)=t^2 \log \left(\frac{t}{2}\right)-2 t (t-3) -2 \log (t)-4+\log (4) \leq 0, \quad t \in \left[ 2^{4/3},2^{2} \right] .
  \]
  Since \(\psi^{'''}(t) =\frac{2 \left(t^2-2\right)}{t^3}\geq 0,  \) which means that \(\psi '\) is a convex function. Since \(\psi'(2)=-1\) and \(\psi'(4)=\log (256)-\frac{13}{2}<0\) we can conclude that \(\psi' \leq 0,\) which implies that \(\psi\) is a decreasing function. Because \(\psi(2)=0,\) we conclude that \(\psi(t)\leq 0\) for \(t \in [2,4]\) which proves lemma.
\end{proof}
  \textbf{Conflict of interest.} The autor declares that he has not conflict of interest.
\nocite{*}
\printbibliography
\end{document}